\newtheorem{thm}{Theorem}[section]
\newtheorem{cor}[thm]{Corollary}
\newtheorem{lem}[thm]{Lemma}
\newtheorem{prop}[thm]{Proposition}
\newtheorem{ques}[thm]{Question}
\theoremstyle{definition}
\newtheorem{defn}[thm]{Definition}
\numberwithin{equation}{section}
\newcommand{\N}{\mathbb{N}}
\def \d {\delta}
\newcommand{\Z}{\mathbb{Z}}
\newcommand{\Orb}{\mathrm{Orb}}
\DeclareMathOperator{\supp}{supp}
\DeclareMathOperator{\diam}{diam}
\newcommand{\ep}{\epsilon}
\newcommand{\ra}{\rightarrow}
\def \B {\mathcal B}
\begin{document}

\title{Sequence entropy tuples and mean sensitive tuples}
\author[J. Li]{Jie Li }
\address[Jie Li]{School of Mathematics and Statistics, Jiangsu Normal
University, Xuzhou, Jiangsu, 221116, P.R. China}
\email{jiel0516@mail.ustc.edu.cn}

\author[C. Liu]{Chunlin Liu }
\address[Chunlin Liu]{CAS Wu Wen-Tsun Key Laboratory of Mathematics, School of Mathematical Sciences, University of Science and Technology of China, Hefei, Anhui, 230026, P.R. China}
\email{lcl666@mail.ustc.edu.cn}

\author[S. Tu]{Siming Tu }
\address[Siming Tu]{ School of Mathematics (Zhuhai), Sun Yat-sen University,
	Zhuhai, Guangdong 519082, P.R. China}
\email{tusiming3@mail.sysu.edu.cn}

\author[T. Yu]{Tao Yu  }
\address[Tao Yu]{Department of Mathematics, Shantou University, Shantou 515063, P. R. China}
\email{ytnuo@mail.ustc.edu.cn}

\begin{abstract}
Using the idea of local entropy theory,  we characterize the sequence entropy tuple via  mean forms of the sensitive tuple in both topological and measure-theoretical senses.
For the measure-theoretical sense, we show that for an ergodic measure-preserving system, the
$\mu$-sequence entropy tuple, the $\mu$-mean sensitive tuple and the
$\mu$-sensitive in the mean tuple coincide, and give an example to show that the ergodicity condition is necessary.
For the topological sense, we show that for a certain class of minimal systems, the
mean sensitive tuple is the sequence entropy tuple.
\end{abstract}
\date{\today}
\subjclass[2020]{37A35, 37B05}
\keywords{Sequence entropy tuples; mean sensitive tuples; sensitive in the mean tuples}
\maketitle
	
\section{Introduction}
By a {\it topological dynamical system} ({\it t.d.s.} for short) we mean a pair $(X,T)$, where $X$ is a compact metric space with a metric $d$ and $T$ is a homeomorphism from $X$ to itself.
A point $x\in X$ is called a  \textit{transitive point} if  ${\Orb(x,T)}=\{x,Tx,\ldots\}$ is dense in $X$.   A t.d.s. $(X,T)$ is called \textit{minimal} if all points in $X$ are transitive points.
Denote by  $\B_X$ all Borel measurable subsets of $X$. A Borel (probability) measure $\mu$ on $X$ is called $T$-\textit{invariant} if  $\mu(T^{-1}A)=\mu(A)$ for any $A\in \mathcal{B}_X$.
A $T$-invariant  measure $\mu$ on $X$ is called \textit{ergodic} if  $B\in \mathcal{B}_X$
with $T^{-1}B=B$ implies $\mu(B)=0$ or $\mu(B)=1$. Denote by $M(X, T)$ (resp. $M^e(X, T)$) the collection of all $T$-invariant measures (resp. all ergodic measures) on $X$.
For $\mu \in M(X,T)$,  the \textit{support} of $\mu$ is defined by
$\supp(\mu )=\{x\in X\colon \mu (U)>0\text{ for any neighbourhood }U\text{ of
}x\}$. Each  measure $\mu\in M(X,T)$ induces a {\it measure-preserving system} ({\it m.p.s.} for short)  $(X,\B_X,\mu, T)$.

\medskip

It is well known that the entropy can be used to measure the local complexity of the structure of orbits in a given system. One may naturally ask how to characterize the entropy in a local way.
The related research started from the series of pioneering papers of Blanchard et al \cite{B1992, B1993, B1997, B1995},  in which the notions of entropy pairs and entropy pairs for a measure  were introduced.  From then on entropy pairs have been intensively studied by many researchers.  Huang and Ye \cite{HY06} extended the notions from pairs to finite tuples, and showed that
if  the entropy of a given system is positive, then there are entropy $n$-tuples
for any $n\in \N$ in both topological and measurable settings.

The sequence entropy was introduced by Ku\v shnirenko \cite{Kus} to establish the relation between spectrum theory and entropy theory. As in classical local entropy theory, the sequence entropy can also be localized.  In \cite{HLSY03, HMY04}  authors investigated the sequence entropy pairs,  sequence entropy tuples and sequence
entropy tuples for a measure, respectively.
Using tools from combinatorics, Kerr and Li \cite{KL07, KL09} studied (sequence) entropy tuples, (sequence)
entropy tuples for a measure and IT-tuples via independence sets.
Huang and Ye \cite{HY09} showed that a system has a sequence entropy $n$-tuple
if and only if its maximal pattern entropy is no less than $\log n$ in both topological and measurable settings.
More introductions and applications of the local entropy theory can refer to a survey \cite{GY09}.

\medskip
In addition to the entropy,  the sensitivity is another candidate to describe the complexity of a system, which was first
used by Ruelle \cite{Ruelle1977}. In \cite{X05}, Xiong introduced a multi-variant version of the sensitivity, called the $n$-sensitivity.
Motivated by the local entropy theory, Ye and Zhang \cite{YZ08} introduced the notion of sensitive tuples. Particularly, they showed that a transitive t.d.s. is $n$-sensitive if and only if it has a sensitive $n$-tuple;
and a sequence entropy $n$-tuple of a minimal t.d.s. is a sensitive $n$-tuple.
For the converse, Maass and Shao \cite{MS07} showed that in a minimal t.d.s., if a sensitive $n$-tuple is a minimal point of  the $n$-fold product t.d.s. then it is a sequence entropy $n$-tuple.

Recently,  Li, Tu and Ye \cite{LTY15} studied the sensitivity in the mean form.
Li, Ye and Yu \cite{LY21,LYY22} further studied the multi-version of mean sensitivity and its local representation, namely, the mean $n$-sensitivity and the mean $n$-sensitive tuple. One naturally wonders if there is still a characterization of sequence entropy tuples via mean sensitive tuples.
By the results of \cite{ FGJO, GJY21,KL07,LYY22}  one can see that a sequence entropy tuple  is not always a mean sensitive tuple even in a minimal t.d.s. Nonetheless,  the works of  \cite{DG16,Huang06,LTY15} yield that every minimal mean sensitive t.d.s. (i.e. has a mean sensitive pair by \cite{LYY22}) is not tame (i.e. exists an IT pair by \cite{KL07}).
So generally, we conjecture that for any minimal t.d.s., a mean sensitive $n$-tuple  is an IT $n$-tuple and so a sequence entropy $n$-tuple by \cite[Theorem 5.9]{KL07}.
Now we can answer this question under an additional condition. Namely,\begin{thm}\label{thm:ms=>it}
	Let $(X,T)$ be a minimal t.d.s. and $\pi: (X,T)\rightarrow (X_{eq},T_{eq})$ be the factor map
	to its maximal equicontinuous factor which is almost one to one. Then for $2\le n\in\mathbb{N}$,
	$$MS_n(X,T)\subset IT_n(X,T),$$
	where $MS_n(X,T)$ denotes all  the mean sensitive $n$-tuples and $IT_n(X,T)$ denotes all the IT $n$-tuples.
\end{thm}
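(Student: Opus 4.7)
I would split the statement into its two inclusions. The inclusion $IT_n(X,T)\subset SE_n(X,T)$ is part of the Kerr--Li independence theory: an IT $n$-tuple admits arbitrarily large independence sets for every choice of neighborhoods, and such a sequence of times witnesses topological sequence entropy at least $\log n$ with respect to the open cover by complements of those neighborhoods, so the tuple is a sequence entropy $n$-tuple. The substance of the theorem lies in the first inclusion $MS_n(X,T)\subset IT_n(X,T)$, where the almost one-to-one hypothesis on $\pi$ is crucial.

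Let $\mathbf{x}=(x_1,\dots,x_n)$ be a mean sensitive $n$-tuple with constant $\delta>0$ and fix open neighborhoods $U_i\ni x_i$. To show $\mathbf{x}\in IT_n(X,T)$, I would produce, for every $N$, an independence set of size $N$ for $(U_1,\dots,U_n)$. First, I would exploit the equicontinuous factor: $(X_{eq},T_{eq})$ is isometric in a suitable metric, so the mean separation of orbits detected in $X$ cannot be absorbed on $X_{eq}$. Using the residual invariant set $X_0=\{x:\pi^{-1}\pi(x)=\{x\}\}$ of injective points, I would select witnesses $y_i\in U_i$ whose equicontinuous projections lie close to injective points, so that along a set $K\subset\mathbb{N}$ of positive upper density, $\pi(T^k y_i)$ stays controlled on $X_{eq}$ while simultaneously $\min_{i\ne j} d(T^k y_i,T^k y_j)>\delta/2$. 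By continuity of $\pi^{-1}$ at injective points, this localises the separation inside fibers, and, combined with minimality of $(X,T)$, it allows orbits to be steered into any prescribed $U_j$ by a small perturbation on the equicontinuous side.

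The main obstacle is upgrading pairwise mean separation along $K$ into full $n$-ary independence, which is what IT demands: every coloring of a finite time set by $\{1,\dots,n\}$ must be realizable. For this I plan to apply the Sauer--Shelah / Karpovsky--Milman extraction used by Kerr--Li, in which positive upper density of separation times yields a positive lower bound on the VC-dimension of the associated family, and a shattered subset of size $N$ gives precisely an $N$-element independence set for $(U_1,\dots,U_n)$. The step where I expect the work to concentrate is verifying that the almost one-to-one hypothesis supplies enough coordinated witnesses for the extraction to produce a shattered configuration rather than merely a set of pairwise separated times; minimality together with the almost one-to-one assumption are exactly the ingredients that let one shift witnesses between fibers while preserving equicontinuous control, closing the argument.
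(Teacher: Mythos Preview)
Your proposal has a genuine gap in the main step. The Sauer--Shelah / Karpovsky--Milman extraction in the Kerr--Li framework needs, as input, \emph{many} distinguishable configurations (typically coming from positive entropy or exponentially many admissible words), from which combinatorics then extracts a shattered set. Mean sensitivity gives you something much thinner: for each open $U$ you get \emph{one} tuple $(y_1,\dots,y_n)\in U^n$ and \emph{one} set $F$ of positive upper density along which the single ``diagonal'' pattern $T^k y_i\in B(x_i,\tau)$ is realized. A single density-positive time set is not raw material for a shattering argument---there is no family to shatter---so the step ``positive upper density of separation times yields a positive lower bound on the VC-dimension'' does not go through as stated. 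Also note a small slip: the witnesses $y_i$ all lie in the \emph{same} open set $U$, not in $U_i$.

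The paper's argument avoids any combinatorial extraction in $X$ and instead pushes the whole question down to $X_{eq}$, which is a minimal rotation on a compact abelian group with Haar measure $\nu$. Set $U_i=\overline{B_\varepsilon(x_i)}$ and $V_i=\pi(U_i)$. Almost one-to-one plus Lemma~\ref{lem:proper one to one} give that each $V_i$ is proper ($\overline{\mathrm{int}\,V_i}=V_i$), and the injective fiber set forces $\mathrm{int}(V_i)\cap \mathrm{int}(V_j)=\emptyset$ for $i\neq j$. The crucial use of mean sensitivity is this: choosing $W_m$ with $\mathrm{diam}(\pi(W_m))<1/m$, the witnesses $x_1^m,\dots,x_n^m\in W_m$ have projections within $1/m$ of each other; since $T_{eq}$ is an isometry, along the density-$\delta$ time set their common projection lands in $\bigcap_i B_{1/m}(V_i)$; unique ergodicity of $(X_{eq},T_{eq})$ turns this positive upper density into $\nu\big(\bigcap_i B_{1/m}(V_i)\big)\ge\delta$, and letting $m\to\infty$ gives $\nu\big(\bigcap_i V_i\big)\ge\delta>0$. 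Now the black box is Proposition~\ref{independent sets} (from \cite{HLSY}): for a minimal rotation, proper sets with disjoint interiors but positive-measure common intersection admit an \emph{infinite} independence set, with all witnesses lying in any prescribed residual set---in particular in $Y_1=\pi(X_1)$. Lifting through the single-point fibers over $Y_1$ gives the infinite independence set for $(U_1,\dots,U_n)$ in $X$. So the almost one-to-one hypothesis is used twice (properness of the $V_i$, and lifting back), while the independence itself is manufactured entirely on the group rotation via Proposition~\ref{independent sets}, not by VC-type combinatorics.
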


In the parallel measure-theoretical setting,  Huang, Lu and Ye \cite{HLY11} studied measurable sensitivity and its local representation.
The notion of $\mu$-mean sensitivity for an invariant measure $\mu$ on a t.d.s. was studied by Garc\'{\i}a-Ramos \cite{G17}.
Li \cite{L16} introduced the notion of the $\mu$-mean $n$-sensitivity,
and showed that
an ergodic m.p.s. is $\mu$-mean $n$-sensitive if and only if its maximal pattern entropy is no less than $\log n$.
The authors  in \cite{LYY22}  introduced the notion of the $\mu$-$n$-sensitivity in the mean, which was
proved to be equivalent  to the $\mu$-mean $n$-sensitivity in the ergodic case.

Using the idea of localization,   the authors \cite{LY21} introduced the notion of the $\mu$-mean sensitive tuple and  showed that
every $\mu$-entropy tuple of an ergodic m.p.s. is a $\mu$-mean sensitive tuple. A natural question is left open in \cite{LY21}:
\begin{ques}
	Is there a characterization of $\mu$-sequence entropy tuples  via $\mu$-mean sensitive tuples?
\end{ques}
The  authors in \cite{LT20}  introduced a weaker notion named the density-sensitive tuple and showed that every $\mu$-sequence entropy tuple  of an ergodic m.p.s. is a $\mu$-density-sensitive tuple.
In this paper, we give a positive answer to  this question. Namely,
\begin{thm}\label{cor:se=sm}
	Let $(X,T)$ be a t.d.s., $\mu\in M^e(X,T)$  and $2\le n\in \N$. Then
	the $\mu$-sequence entropy $n$-tuple, the $\mu$-mean sensitive $n$-tuple and the
	$\mu$-$n$-sensitive in the mean tuple coincide.
\end{thm}
By the definitions, it is easy to see that a $\mu$-mean sensitive $n$-tuple must be a $\mu$-$n$-sensitive in the mean tuple. Thus, Theorem \ref{cor:se=sm} is a direct corollary of the following two theorems.
\begin{thm}\label{thm:sm=>se}
	Let $(X,T)$ be a t.d.s., $\mu\in M(X,T)$ and $2\le n\in \N$. Then
	each $\mu$-$n$-sensitive in the mean tuple is a $\mu$-sequence entropy $n$-tuple.
\end{thm}

\begin{thm}\label{thm:se=>ms}
	Let $(X,T)$ be a t.d.s., $\mu\in M^e(X,T)$  and $2\le n\in \N$. Then
	each $\mu$-sequence entropy $n$-tuple is  a $\mu$-mean sensitive $n$-tuple.
\end{thm}
In fact, Theorem \ref{thm:sm=>se} shows a bit more than Theorem \ref{cor:se=sm}, as for a $T$-invariant measure $\mu$ which is not ergodic, every $\mu$-$n$-sensitive in the mean tuple is  still a $\mu$-sequence entropy $n$-tuple.
However, the following result shows that  ergodicity of $\mu$ in Theorem \ref{thm:se=>ms} is necessary.
\begin{thm}\label{thm:sm=/=se}
	For every $2\le n\in \N$, there exist a t.d.s. $(X,T)$ and $\mu\in M(X,T)$ such that there is a $\mu$-sequence entropy $n$-tuple but it is not a $\mu$-$n$-sensitive in the mean tuple.
\end{thm}

It is fair to note that  Garc{\'i}a-Ramos told us that at the same time, he with Mu{\~n}oz-L{\'o}pez had also got a  completely independent proof of the equivalence of the sequence entropy pair and the mean sensitive pair in the ergodic case \cite{GM22}. Their proof relies on the deep equivalent characterization of measurable sequence entropy pairs developed by  Kerr and Li \cite{KL09} using the combinatorial notion of  independence.  Our results provide more information in general case, and the proofs  work on the classical definition of  sequence entropy pairs introduced in \cite{HMY04}. It is worth noting that the proofs depend on a new interesting ergodic measure decomposition result (Lemma \ref{0726}), which was applied to prove the profound Erd\"os's conjectures in the number theory by Kra, Moreira, Richter and Robertson \cite{KMRR,KMRR1}. This decomposition may have more applications because it has the hybrid topological and Borel structures.

The outline of the paper is the following. In Sec. \ref{sec2}, we recall some basic notions that we
will use in the paper. In Sec. \ref{sec3}, we prove Theorem \ref{thm:sm=>se}. In Sec. \ref{sect:proof of thm se=>ms}, we show Theorem \ref{thm:se=>ms} and Theorem \ref{thm:sm=/=se}. In Sec. \ref{sec5}, we study the mean sensitive tuple and the sequence entropy in the topological sense and show Theorem \ref{thm:ms=>it}.
\section{Preliminaries}\label{sec2}
Throughout the paper, denote by $\mathbb{N}$ and ${\mathbb{Z}}_{+}$ the
collections of natural numbers  $\{1,2,\dots\}$ and non-negative integers $\{0,1,2,\dots\}$, respectively.

For $F\subset \mathbb{Z}_+$, denote by $\#\{F\}$ (or simply write $\#F$ when it is clear from the context) the cardinality  of $F$.   The \emph{upper density} $\overline{D}(F)$ of $F$  is defined by
$$
\overline{D}(F)=\limsup_{n\to\infty} \frac{\#\{F\cap[0,n-1]\}}{n}.
$$
Similarly, the \emph{lower density} $\underline{D}(F)$ of $F$ can be given by
$$
\underline{D}(F)=\liminf_{n\to\infty} \frac{\#\{F\cap[0,n-1]\}}{n}.
$$
If $\overline{D}(F)=\underline{D}(F)$,  we say that the \textit{density} of $F$ exists and equals to  the common value, which is written as $D(F)$.

Given a t.d.s. $(X,T)$ and $n\in \N$, denote by $X^{(n)}$  the $n$-fold product of $X$. Let $\Delta_n(X)=\{(x,x,\dots, x)\in X^{(n)}\colon x\in X\}$  be the diagonal of $ X^{(n)}$ and  $\Delta_n^\prime(X)=\{(x_1,x_2,...,x_n)\in X^{(n)}:  x_i=x_j  \text{ for some } 1\le i\neq j\le n \}$.

If a closed subset $Y\subset X$ is $T$-invariant in the sense of $TY= Y$,
then the restriction $(Y, T|_Y)$  (or simply  write $(Y,T)$ when it is clear from the context) is also a t.d.s., which is called a \textit{subsystem} of $(X,T)$.

Let. $(X,T)$ be a t.d.s.,  $x\in X$ and $U,V\subset X$. Denote by
$$
N(x,U)=\{n\in\Z_+ \colon T^n x\in U\} \ \text{ and }\  N(U,V)=\{n\in\Z_+: U\cap T^{-n}V\neq\emptyset\}.
$$
A t.d.s. $(X,T)$ is called \textit{transitive} if $N(U,V)\neq\emptyset$ for all non-empty open subsets $U,V$ of $X$. It is well known that the set of all transitive points in a transitive t.d.s. forms a dense $G_\delta$ subset of $X$ .

Given two t.d.s.  $(X, T)$ and $(Y,S)$,
a  map $\pi\colon X\to Y$  is called a \textit{factor map}
if $\pi$ is surjective and continuous such that  $\pi\circ T=S\circ\pi$,
and in which case $(Y,S)$ is referred to be a \textit{factor} of $(X, T)$. Furthermore,
If $\pi$ is a homeomorphism, we say  that $(X,T)$ is \textit{conjugate}  to  $(Y,S)$.

A t.d.s. $(X,T)$ is called \textit{equicontinuous} (resp. \textit{mean equicontinuous}) if for any $\ep>0$ there is $\delta>0$ such that if $x,y\in X$ with $d(x,y)<\delta$ then $\max_{k\in\Z_+}d(T^kx,T^ky)<\ep$ (resp. $\limsup_{n\to\infty}\frac{1}{n}\sum_{k=0}^{n-1}d(T^kx,T^ky)<\ep$).
Every t.d.s. $(X, T)$ is known to  have a maximal equicontinuous factor (or a maximal mean equicontinuous factor \cite{LTY15}). More studies on mean equicontinuous systems can see the recent survey \cite{LYY}.

\medskip

In the following of this section, we
fix a t.d.s. $(X,T)$ with a measure $\mu\in M(X,T)$. The {\it entropy of a finite measurable partition $\alpha=\left\{A_1, A_2, \ldots, A_k\right\}$ of $X$ }  is defined by
$
H_\mu(\alpha)=-\sum_{i=1}^k \mu\left(A_i\right) \log \mu\left(A_i\right),
$
where $0 \log 0$ is defined to be 0. Moreover, we define the {\it sequence entropy of $T$ with respect to $\alpha$ along an increasing sequence $S=\left\{s_i\right\}_{i=1}^{\infty}$ of $\mathbb{Z}_+$ } by
$$
h_\mu^{S}(T, \alpha)=\limsup _{n\rightarrow \infty} \frac{1}{n} H_\mu\left(\bigvee_{i=1}^n T^{-s_i} \alpha\right).
$$
The {\it sequence entropy of $T$ along the sequence $S$}  is
$$
h_\mu^{S}(T)=\sup _{\alpha} h_\mu^{S}(T, \alpha),
$$
where the supremum takes over all  finite measurable partitions.
Correspondingly, the {\it topological sequence entropy of $T$ with respect to $S$ and a finite open cover $\mathcal{U}$ } is
$$
h^{S}(T, \mathcal{U})=\limsup _{n \rightarrow\infty} \frac{1}{n} \log N\left(\bigvee_{i=1}^n T^{-s_i} \mathcal{U}\right),
$$
where $N\left(\bigvee_{i=1}^n T^{-s_i} \mathcal{U}\right)$ is the minimum among the cardinalities of all sub-families of
$\bigvee_{i=1}^n T^{-s_i} \mathcal{U}$ covering $X$.
The {\it topological sequence entropy of $T$ with respect to $S$ } is defined by $$h^{S}(T)=\sup _{\mathcal{U}} h^{S}(T, \mathcal{U}),$$
where the supremum takes over all  finite open covers.

Let $(x_i)_{i=1}^n\in X^{(n)}$. A finite cover  $\mathcal{U}=\{U_1,U_2,\ldots,U_k\}$ of $X$ is said to be an {\it admissible cover} with respect to $(x_i)_{i=1}^n$ if for each $1\leq j\leq k$ there exists $1\leq i_j\leq n$ such that $x_{i_j}\notin\overline{U_j}$. Analogously, we define admissible partitions with respect to $(x_i)_{i=1}^n$.

\begin{defn}[\cite{HMY04},\cite{MS07}]An $n$-tuple $(x_i)_{i=1}^n\in X^{(n)}\setminus \Delta_n(X)$, $n\geq 2$ is called
	\begin{itemize}
		\item  a sequence entropy $n$-tuple for $\mu$  if for any admissible finite Borel measurable partition $\alpha$ with respect to $(x_i)_{i=1}^n$, there exists a sequence $S=\{m_i\}_{i=1}^{\infty}$ of $\Z_+$ such that $h^{S}_{\mu}(T,\alpha)>0$. 	Denote by $SE_n^{\mu}(X,T)$ the set of all sequence entropy $n$-tuples for $\mu$.
		
		\item  a sequence entropy $n$-tuple if  for any admissible finite open cover $\mathcal{U}$ with respect to $(x_i)_{i=1}^n$, there exists a sequence $S=\{m_i\}_{i=1}^{\infty}$ of $\Z_+$ such that $h^{S}(T,\mathcal{U})>0$. 	Denote by $SE_n(X,T)$ the set of all sequence entropy $n$-tuples.
	\end{itemize}
\end{defn}	

We say that $f\in L^2(X,\B_X,\mu)$ is {\it almost periodic} if $\{f\circ T^n : n\in \mathbb{Z}_+\}$ is precompact
in $L^2(X,\B_X,\mu)$.  The set of all
almost periodic functions is denoted by $H_c$,
and there exists a $T$-invariant $\sigma$-algebra $\mathcal{K}_\mu \subset \B_X$
such that $H_c= L^2(X,\mathcal{K}_\mu,\mu)$, $\mathcal{K}_\mu$ is called the Kronecker algebra of $(X, \B_X,\mu, T )$. The product $\sigma$-algebra of $X^{(n)}$ is denoted by $\mathcal{B}_X^{(n)}$.  Define the measure $\lambda_n(\mu)$ on $\mathcal{B}_X^{(n)}$ by letting $$\lambda_n(\mu)(\prod_{i=1}^nA_i)=\int_{X}\prod_{i=1}^n\mathbb{E}(1_{A_i}|\mathcal{K}_\mu)d\mu.$$
Note that $SE_n^{\mu}(X,T)=\supp(\lambda_n(\mu))\setminus \Delta_n(X)$ \cite[Theorem 3.4]{HMY04}.

\section{Proof of Theorem \ref{thm:sm=>se}}\label{sec3}
\begin{defn}[\cite{LY21}]\label{defn:mu mean n-sensitive tuple}
For $2\le n\in \N$ and a t.d.s. $(X,T)$ with $\mu\in M(X,T)$, we say  that the $n$-tuple $(x_1,x_2,\dotsc,x_n)\in X^{(n)}\setminus \Delta_n(X)$ is
\begin{enumerate}
\item a \textit{$\mu$-mean $n$-sensitive tuple} if for any open neighbourhoods $U_i$ of $x_i$ with $i=1,2,\dotsc,n$, there is $\delta> 0$ such that for any $A\in \B_X$ with $\mu(A)>0$ there are $y_1,y_2,\dotsc,y_n\in A$ and a subset $F$ of $\Z_+$ with $\overline{D}(F)>\delta$ such that $T^k y_i \in U_i$ for all $i=1,2,\dots,n$ and $k\in F$.
\item  a \textit{$\mu$-$n$-sensitive in the mean tuple} if for any $\tau>0$, there is $\delta=\delta(\tau)> 0$ such that for any  $A\in\B_X$ with $\mu(A)>0$  there is $m\in \N$ and $y_1^m,y_2^m,\dotsc,y_n^m\in A$ such that
$$
\frac{\#\{0\le k\le m-1: T^ky_i^m\in B(x_i,\tau), i=1,2,\ldots,n\}}{m}>\delta.
$$
\end{enumerate}
\end{defn}
We denote the set of all $\mu$-mean $n$-sensitive tuples (resp.  $\mu$-$n$-sensitive in the mean tuples) by $MS_n^\mu(X,T)$ (resp. $SM_n^\mu(X,T)$). We call an $n$-tuple $(x_1,x_2,\dotsc,x_n)\in X^{(n)}$  \textit{essential} if $x_i\neq x_j$ for each $1\le i<j\le n$, and at this time we write the collection of all essential $n$-tuples in $MS_n^\mu(X,T)$ (resp.  $SM_n^\mu(X,T)$) as $MS_n^{\mu,e}(X,T)$ (resp. $SM_n^{\mu,e}(X,T)$).

\begin{proof}[Proof of Theorem \ref{thm:sm=>se}]
It suffices to prove $SM_n^{\mu,e}(X,T)\subset SE_n^{\mu,e}(X,T)$.
Let $(x_1,\ldots,x_n)\in SM_n^{\mu,e}(X,T)$. Take $\alpha=\{A_1,\ldots,A_l\}$ as an admissible partition of $(x_1,\ldots,x_n)$. Then for each $1\le k\le l$, there is $i_k\in \{1,\ldots,n\}$ such that $x_{i_k}\notin \overline{A_k}$. Put $E_i=\{1\le k\le l: x_i\not\in \overline{A_k}\}$ for $1\le i\le n$. Obviously, $\cup_{i=1}^n E_i=\{1,\ldots,l\}$. Set
$$B_1=\cup_{k\in E_1}A_k, B_2=\cup_{k\in E_2\setminus E_1}A_k, \ldots,   B_n=\cup_{k\in E_n\setminus(\cup_{j=1}^{n-1}E_j)}A_k.
$$
Then $\beta=\{B_1,\ldots,B_n\}$ is also an admissible partition of $(x_1,\ldots,x_n)$ such that $x_i\notin \overline{B_i}$ for all $1\le i\le n$. Without loss of generality, we assume $B_i\neq \emptyset$ for $1\le i\le n$.
It suffices to show that there exists a sequence $S=\{m_i\}_{i=1}^{\infty}$ of $\Z_+$ such that $h^{S}_{\mu}(T,\beta)>0,$ as $\alpha\succ\beta$.
Let $$h^*_\mu(T,\beta)=\sup \{h^{S}_{\mu}(T,\beta): S \ \text{is a sequence of }  \Z_+ \}.$$
By \cite[Lemma 2.2 and Theorem 2.3]{HMY04},  we have $h^*_\mu(T,\beta)=H_\mu(\beta|\mathcal{K}_\mu)$,
where $\mathcal{K}_\mu$ is the Kronecker algebra of $(X,\B_X,\mu,T)$.
So it suffices to show $\beta\nsubseteq \mathcal{K}_\mu$.

We prove it by contradiction. Now we assume that $\beta\subseteq \mathcal{K}_\mu$. Then for each $i=1,\ldots,n$, $1_{B_i}$ is an almost periodic function. By \cite[Theorems 4.7 and 5.2]{Y19}, $1_{B_i}$ is a $\mu$-equicontinuous in the mean function. That is, for each $1\le i\le n$ and  any $\tau>0$, there is a compact $K\subset X$ with $\mu(K)>1-\tau$ such that for any $\ep'>0$, there is $\delta'>0$  such that for all $m\in\N$, whenever   $x,y\in K$ with $d(x,y)<\delta'$,
\begin{equation}\label{3}
\frac{1}{m}\sum_{t=0}^{m-1}|1_{B_i}(T^tx)- 1_{B_i}(T^ty)|<\ep'.
\end{equation}

On the other hand, take $\ep>0$ such that $B_\ep(x_i)\cap B_i=\emptyset$ for  $i=1,\ldots,n$. Since $(x_1,\ldots,x_n)\in SM_n^{\mu,e}(X,T)$,  there is $\delta:=\delta(\ep)>0$ such that for any $A\in \B_X$ with $\mu(A)>0$ there are $m\in\N$ and $y_1^m,\ldots,y_n^m\in A$ such that if we denote $C_m=\{0\le t\le m-1:T^ty_i^m\in B_\ep(x_i)\text{ for all }i=1,2,\ldots,n\}$ then $\#C_m \ge m\delta$. Since $ B_\ep(x_1)\cap B_1=\emptyset$, then $ B_\ep(x_1)\subset \cup_{i=2}^nB_i$. This implies that there is $i_0\in \{2,\ldots,n\}$ such that
$$
\# \{t\in C_m: T^ty_1^m\in B_{i_0} \}\ge \frac{\#C_m}{n-1}.
$$
For any $t\in C_m$, we have $T^ty_{i_0}^m\in B_\ep(x_{i_0})$,  and then  $T^ty_{i_0}^m\notin B_{i_0}$, as  $B_\ep(x_{i_0})\cap B_{i_0}=\emptyset$. This implies that
\begin{equation}\label{e1}
	\frac{1}{m}\sum_{t=0}^{m-1}|1_{B_{i_0}}(T^ty_1^m)-1_{B_{i_0}}(T^ty_{i_0}^m)|\ge\frac{\#C_m}{m(n-1)}\ge \frac{\delta}{n-1}.
\end{equation}
Choose a measurable subset $A\subset K$ such that $\mu(A)>0$ and $\diam(A)=\sup\{d(x,y):x,y\in A\}<\delta'$, and $\ep'=\frac{\delta}{2(n-1)}$. Then by \eqref{3}, for any $m\in\mathbb{N}$ and $x,y\in A$,
$$
\frac{1}{m}\sum_{t=0}^{m-1}|1_{B_{i_0}}(T^tx)- 1_{B_{i_0}}(T^ty)|<\frac{\delta}{2(n-1)},
$$
a contradiction with \eqref{e1}.
Thus, $SM_n^{\mu,e}(X,T)\subset SE_n^{\mu,e}(X,T)$.
\end{proof}

\section{Proof of Theorem \ref{thm:se=>ms}}\label{sect:proof of thm se=>ms}
In Section 4.1, we first reduce Theorem \ref{thm:se=>ms} to just prove that it is true for the ergodic m.p.s.
 with a continuous factor map to its Kronecker factor, and  then we finish the proof of  Theorem \ref{thm:se=>ms} under this assumption. In Section 4.2, we show the condition that $\mu$ is ergodic is necessary.
\subsection{Ergodic case}
Throughout this section, we will use the following two types of factor maps between two m.p.s. $(X, \B_X,\mu, T)$ and $(Z, \B_Z,\nu, S)$.
\begin{enumerate}
\item \emph{Measurable factor maps:} a measurable map $\pi: X \rightarrow Z$ such that $\mu\circ\pi^{-1}=\nu$ and $\pi \circ T=S \circ \pi$ for $\mu$-a.e;
\item \emph{Continuous factor maps:} a topological factor map $\pi: X \rightarrow Z$ such that $\mu\circ\pi^{-1}=\nu$.
\end{enumerate}
If a continuous factor map $\pi$ such that  $\pi^{-1}(\B_Z)=\mathcal{K}_\mu$,
 $\pi$ is called a continuous factor map to its Kronecker factor.

The following result is a weaker version in \cite[Proposition 3.20]{KMRR}.
\begin{lem}\label{lem3}
Let $(X, \mathcal{B}_X,\mu, T)$ be an ergodic m.p.s. Then there exists an ergodic m.p.s. $(\tilde{X},\tilde{B}, \tilde{\mu}, \tilde{T})$ and a continuous factor map $\tilde{\pi}: \tilde{X} \rightarrow X$ such that  $(\tilde{X},\tilde{B}, \tilde{\mu}, \tilde{T})$ has a continuous factor map to its Kronecker factor.
\end{lem}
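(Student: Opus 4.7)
The plan is to realize the Kronecker factor as a compact group rotation and build the extension as the closed graph of the (measurable) factor map into that rotation. By the Halmos--von Neumann theorem, the Kronecker factor of $(X,\mathcal{B}_X,\mu,T)$ is measurably isomorphic to a minimal rotation $(G,\mathcal{B}_G,m_G,R_a)$ on a compact metrizable abelian group $G$, where $R_a(g)=a\cdot g$ for some $a\in G$. This isomorphism provides a measurable $T$-equivariant map $\pi\colon X\to G$, defined $\mu$-a.e., with $\pi^{-1}(\mathcal{B}_G)=\mathcal{K}_\mu$ modulo $\mu$-null sets and $\pi_*\mu=m_G$. After discarding a null set we may assume $\pi$ is defined on all of $X$; replacing $X$ by $\supp(\mu)$ if necessary, we may further assume $\mu$ has full support.

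Now I form the product t.d.s.\ $(X\times G,\,T\times R_a)$ and the graph map $\varphi(x):=(x,\pi(x))$. Set $\tilde{\mu}:=\varphi_*\mu$, $\tilde{X}:=\supp(\tilde{\mu})\subseteq X\times G$, and $\tilde{T}:=(T\times R_a)|_{\tilde{X}}$. Almost-everywhere equivariance of $\varphi$ forces $\tilde{\mu}$ to be $(T\times R_a)$-invariant, so $\tilde{X}$ is a closed $(T\times R_a)$-invariant compact set and $(\tilde{X},\tilde{T})$ is a t.d.s. The coordinate projections $\tilde{\pi}\colon\tilde{X}\to X$ and $\sigma\colon\tilde{X}\to G$ are continuous, equivariant, and surjective: the image $\tilde{\pi}(\tilde{X})$ is closed and contains $\supp(\mu)=X$, while $\sigma(\tilde{X})$ is closed and contains $\supp(m_G)=G$. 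Because $\varphi\colon X\to\tilde{X}$ has the measurable inverse $\tilde{\pi}$, it is a measure-theoretic isomorphism from $(X,\mathcal{B}_X,\mu,T)$ onto $(\tilde{X},\tilde{\mathcal{B}},\tilde{\mu},\tilde{T})$, so $\tilde{\mu}$ inherits ergodicity from $\mu$.

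What remains is to verify that $\sigma$ is a continuous factor map to the Kronecker factor of $(\tilde{X},\tilde{\mathcal{B}},\tilde{\mu},\tilde{T})$, i.e.\ that $\sigma^{-1}(\mathcal{B}_G)=\mathcal{K}_{\tilde{\mu}}$ modulo $\tilde{\mu}$-null sets. This is the one step demanding care, and it follows by transporting the identity $\pi^{-1}(\mathcal{B}_G)=\mathcal{K}_\mu$ through $\varphi$: since $\sigma\circ\varphi=\pi$ and the Kronecker algebra is an isomorphism invariant of measure-preserving systems, one obtains $\mathcal{K}_{\tilde{\mu}}=\varphi_*\mathcal{K}_\mu=\varphi_*\pi^{-1}(\mathcal{B}_G)=\sigma^{-1}(\mathcal{B}_G)$ mod null sets. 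The main work, and the only genuinely nontrivial ingredient, is the initial appeal to Halmos--von Neumann to produce a compact-group model $G$ of the Kronecker factor together with a Borel-measurable intertwiner $\pi$; everything afterwards is routine topological bookkeeping on the graph $\tilde{X}\subseteq X\times G$.
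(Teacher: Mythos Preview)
The paper does not give its own proof of this lemma; it simply records it as a weaker version of \cite[Proposition~3.20]{KMRR}. Your graph-of-the-Kronecker-map construction is exactly the standard argument behind that proposition and is correct: Halmos--von~Neumann produces the compact group model $(G,R_a)$ and a Borel intertwiner $\pi$, and pushing $\mu$ forward along $x\mapsto(x,\pi(x))$ yields an isomorphic ergodic extension on which the second coordinate is a \emph{continuous} realization of the Kronecker factor. Your verification that $\sigma^{-1}(\mathcal{B}_G)=\mathcal{K}_{\tilde\mu}$ via transport through the isomorphism $\varphi$ is the right way to close the argument.

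One small point worth tightening: the step ``replacing $X$ by $\supp(\mu)$ if necessary'' does not prove the lemma as stated, since it changes the target of the continuous factor map $\tilde\pi$. The cleanest repair is to drop that reduction and simply take $\tilde X = X\times G$ rather than $\supp(\tilde\mu)$; both coordinate projections are then trivially surjective, $\tilde\mu$ is still the graph measure, and your identification $\sigma^{-1}(\mathcal{B}_G)=\mathcal{K}_{\tilde\mu}$ (which is a mod-$\tilde\mu$-null statement anyway) goes through unchanged.
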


The following result shows that we only need to prove $SE_n^{\mu}(X,T)\subset MS_n^{\mu}(X,T)$ for all ergodic m.p.s. with a continuous factor map to its Kronecker factor.
\begin{lem}\label{lem5}
If $SE_n^{\tilde{\mu}}(\tilde{X},\tilde T)\subset MS_n^{\tilde{\mu}}(\tilde{X},\tilde T)$
for all ergodic m.p.s. $(\tilde{X},\tilde{B}, \tilde{\mu}, \tilde{T})$ with a continuous factor map to its Kronecker factor, then $SE_n^{\mu}(X,T)\subset MS_n^{\mu}(X,T)$ for all ergodic m.p.s. $(X, \mathcal{B}_X,\mu, T)$.
\end{lem}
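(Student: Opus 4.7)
The plan is to use the extension $\tilde\pi\colon(\tilde X,\tilde{\mathcal B},\tilde\mu,\tilde T)\to(X,\mathcal B_X,\mu,T)$ supplied by Lemma~\ref{lem3} as a bridge. Given $(x_1,\ldots,x_n)\in SE_n^\mu(X,T)$, I would first lift it to a $\tilde\mu$-sequence entropy $n$-tuple on $\tilde X$, then invoke the hypothesis to obtain a $\tilde\mu$-mean sensitive $n$-tuple on $\tilde X$, and finally push the mean sensitivity back down to $X$ through $\tilde\pi$.

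For the lifting step I would use the characterization $SE_n^\mu(X,T)=\supp(\lambda_n(\mu))\setminus\Delta_n(X)$ recorded at the end of Section~2, together with the pushforward identity $(\tilde\pi\times\cdots\times\tilde\pi)_{*}\lambda_n(\tilde\mu)=\lambda_n(\mu)$. Once this identity is in hand, continuity of $\tilde\pi^{(n)}$ and compactness of $\tilde X^{(n)}$ force $\supp(\lambda_n(\mu))\subset \tilde\pi^{(n)}(\supp(\lambda_n(\tilde\mu)))$, so one can choose preimages $\tilde x_i\in\tilde\pi^{-1}(x_i)$ with $(\tilde x_1,\ldots,\tilde x_n)\in\supp(\lambda_n(\tilde\mu))$; non-diagonality of $(x_i)$ is inherited by the $\tilde x_i$, placing the lift in $SE_n^{\tilde\mu}(\tilde X,\tilde T)$. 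The pushforward identity itself I would verify by applying the Jacobs--de Leeuw--Glicksberg splitting on $X$, writing each $f\in L^2(X,\mu)$ as $f=f_c+f_{wm}$ with $f_c\in H_c$ and $f_{wm}$ weakly mixing, and noting that $f_c\circ\tilde\pi$ remains almost periodic on $\tilde X$ while $f_{wm}\circ\tilde\pi$ remains weakly mixing there (both follow at once from the identities $\|\tilde T^n(h\circ\tilde\pi)-h\circ\tilde\pi\|_{L^2(\tilde\mu)}=\|T^nh-h\|_{L^2(\mu)}$ and $\langle \tilde T^n(f\circ\tilde\pi),g\circ\tilde\pi\rangle_{\tilde\mu}=\langle T^nf,g\rangle_\mu$). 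This yields $\mathbb{E}_{\tilde\mu}(f\circ\tilde\pi\mid\mathcal K_{\tilde\mu})=\mathbb{E}_\mu(f\mid\mathcal K_\mu)\circ\tilde\pi$ for all $f\in L^2(X,\mu)$; plugging $f=1_{A_i}$ into the definition of $\lambda_n(\tilde\mu)$ on $\prod\tilde\pi^{-1}(A_i)$ and integrating closes the identity.

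The push-down step is then straightforward. Given open neighbourhoods $U_i$ of $x_i$, I would set $\tilde U_i=\tilde\pi^{-1}(U_i)$, which are open neighbourhoods of $\tilde x_i$, and let $\delta>0$ be the mean sensitivity constant produced by the hypothesis for the lifted tuple. For any $A\in\mathcal B_X$ with $\mu(A)>0$, the set $\tilde A=\tilde\pi^{-1}(A)$ satisfies $\tilde\mu(\tilde A)=\mu(A)>0$, so there exist $\tilde y_1,\ldots,\tilde y_n\in\tilde A$ and $F\subset\N$ with $\overline D(F)>\delta$ and $\tilde T^k\tilde y_i\in\tilde U_i$ for $k\in F$; the images $y_i=\tilde\pi(\tilde y_i)\in A$ then satisfy $T^ky_i\in U_i$ on $F$ by equivariance, confirming $(x_1,\ldots,x_n)\in MS_n^\mu(X,T)$ with the same $\delta$. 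The main obstacle in the whole plan is the pushforward identity, because a priori $\tilde\pi^{-1}\mathcal K_\mu$ is only a sub-$\sigma$-algebra of $\mathcal K_{\tilde\mu}$, and extra almost periodic content upstairs could in principle distort the conditional expectation; the Jacobs--de Leeuw--Glicksberg argument sketched above is exactly what rules this out, after which the remaining pieces are routine.
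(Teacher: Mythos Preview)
Your proposal is correct and follows the same three-step skeleton as the paper: lift the tuple to $\tilde X$, apply the hypothesis there, and push the mean sensitivity back through $\tilde\pi$; your push-down step is verbatim the paper's.

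The only difference is in the lifting step. The paper simply invokes \cite[Proposition~3.2(c)]{HMY04}, which states that $\mu$-sequence entropy tuples lift along measure-theoretic factor maps, whereas you reprove this fact from the support characterization $SE_n^\mu(X,T)=\supp(\lambda_n(\mu))\setminus\Delta_n(X)$ together with the pushforward identity $(\tilde\pi^{(n)})_*\lambda_n(\tilde\mu)=\lambda_n(\mu)$. Your Jacobs--de~Leeuw--Glicksberg argument for the identity $\mathbb{E}_{\tilde\mu}(f\circ\tilde\pi\mid\mathcal K_{\tilde\mu})=\mathbb{E}_\mu(f\mid\mathcal K_\mu)\circ\tilde\pi$ is sound; the preservation of almost periodicity under pullback is immediate from the isometry, and for the weak-mixing part your inner-product identity (taken with $g=f_{wm}$) shows that the spectral measure of $f_{wm}\circ\tilde\pi$ coincides with that of $f_{wm}$, hence is nonatomic, which is what forces $f_{wm}\circ\tilde\pi\perp H_c(\tilde X)$. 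This is essentially how Proposition~3.2(c) in \cite{HMY04} is proved, so you have unpacked a black-box citation rather than taken a different route. The payoff is a self-contained argument; the cost is a page of work the paper outsources in one line.
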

\begin{proof}
By Lemma \ref{lem3}, there exists an ergodic m.p.s. $(\tilde{X},\tilde{B}, \tilde{\mu}, \tilde{T})$ and a continuous factor map $\tilde{\pi}: \tilde{X} \rightarrow X$ such that  $(\tilde{X},\tilde{B}, \tilde{\mu}, \tilde{T})$ has a continuous factor map to its Kronecker factor. Thus $SE_n^{\tilde\mu}(\tilde{X},\tilde T)\subset MS_n^{\tilde\mu}(\tilde{X},\tilde T)$, by the assumption.

For any $(x_1,\dotsc,x_n)\in SE_n^{\mu}(X,T)\setminus \Delta_n'(X)$, by \cite[Theorem 3.7]{HMY04} there exists an $n$-tuple $(\tilde{x_1},\dots,\tilde{x_n})\in SE_n^{\tilde\mu}(\tilde{X},\tilde T)\setminus \Delta_n'(\tilde{X})$ such that $\tilde\pi(\tilde{x_i})=x_i$. For any open neighborhood $U_1\times  \dots \times U_n$ of $(x_1,\dotsc,x_n)$ with
$U_i\cap U_j=\emptyset$ for $i\neq j$, then
$\tilde\pi^{-1}(U_1)\times \dots \times \tilde\pi^{-1}(U_n)$ is an open neighborhood of $(\tilde{x_1},\dots,\tilde{x_n})$.
Since $(\tilde{x_1},\dots,\tilde{x_n})\in SE_n^{\tilde\mu}(\tilde{X},\tilde T)\setminus \Delta_n'(\tilde{X})\subset MS_n^{\tilde\mu}(\tilde{X},\tilde T)\setminus \Delta_n'(\tilde{X})$,
there exists $\delta>0$ such that  for any $A\in\mathcal{B}_X$ with $\tilde{\mu}(\tilde\pi^{-1}(A))=\mu(A)>0$,
there exist $F\subset \mathbb{N}$ with $\overline{D}(F)\ge \delta$ and  $\tilde{y_1},\dots,\tilde{y_n}\in \tilde\pi^{-1}(A)$ such that for any $m\in F$,
$$
(\tilde T^m\tilde{y_1},\dots,\tilde T^m\tilde{y_n})
\in \tilde\pi^{-1}(U_1)\times \dots \times \tilde\pi^{-1}(U_n)$$
and hence $(T^m\tilde\pi(\tilde{y_1}),\dots,T^m\tilde\pi(\tilde{y_n}))\in U_1\times \dots \times U_n$. Note that $\tilde\pi(\tilde{y_i})\in A$ for each $i=1,2,\ldots,n$. Thus we have $(x_1,\dotsc,x_n)\in MS_n^{\mu}(X,T)$.
\end{proof}	
	
According to the lemma above-mentioned,	
in the rest of this section, we fix an ergodic m.p.s. with a continuous factor map $\pi:(X,\mathcal{B}_X, \mu, T)\rightarrow (Z,\mathcal{B}_Z, \nu, R)$
 to its Kronecker factor.
Moreover, we fix a measure disintegration $z \to \eta_{z}$ of $\mu$ over $\pi$,
i.e. $\mu = \int_Z \eta_{z} d\nu(z)$.

The following lemma plays a crucial role in our proof.  In \cite[Proposition 3.11]{KMRR},
the authors proved it for $n=2$, but general cases are similar in idea.  For readability, we move the complicated proof to Appendix \ref{APPENDIX}.

\begin{lem}\label{0726}
Let $\pi:(X,\mathcal{B}_X, \mu, T)\rightarrow (Z,\mathcal{B}_Z, \nu, R)$ be
 a continuous factor map to its Kronecker factor.
Then for each $n\in\N$, there exists a continuous map $\textbf{x}\mapsto \lambda_{\textbf{x}}^n$ from $X^{(n)}$ to  $M(X^{(n)})$ such that
the map $\textbf{x} \mapsto \lambda_{\textbf{x}}^n$ is an ergodic decomposition of $\mu^{(n)}$, where $\mu^{(n)}$ is the n-fold product of $\mu$ and $$	
\lambda^n_\textbf{x} = \int_Z \eta_{z + \pi(x_1)} \times\dots\times \eta_{z+\pi(x_n)} d\nu(z), \text{ for }\textbf{x}=(x_1,x_2,\ldots,x_n).$$
\end{lem}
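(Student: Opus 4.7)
The plan is to define $\lambda_x^n$ by the stated formula and verify, in turn, four properties: (i) $x \mapsto \lambda_x^n$ is continuous, (ii) each $\lambda_x^n$ is $T^{(n)}$-invariant, (iii) $\mu^{(n)} = \int_{X^n} \lambda_x^n \, d\mu^{(n)}(x)$, and (iv) each $\lambda_x^n$ is $T^{(n)}$-ergodic. Items (i)--(iii) will be essentially bookkeeping; the main obstacle is (iv).

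For (i), I would use the continuity of $\pi : X \to Z$ combined with the continuity of the disintegration $z \mapsto \eta_z$ as an $M(X)$-valued map; the latter is precisely what the continuous Kronecker model supplied by Lemma \ref{lem3} is designed to provide (cf.\ \cite[Proposition 3.20]{KMRR}). For (ii), write $R$ as translation by some $\alpha \in Z$; the equivariance $T_*\eta_z = \eta_{Rz}$ applied coordinate-wise yields
$$
T^{(n)}_* \lambda_x^n = \int_Z \eta_{z+\alpha+\pi(x_1)} \times \cdots \times \eta_{z+\alpha+\pi(x_n)} \, d\nu(z),
$$
and the change of variable $z \mapsto z - \alpha$, justified by translation invariance of the Haar measure $\nu$, returns $\lambda_x^n$. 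For (iii), I would unfold by Fubini and exploit the identity $\int_X \eta_{z+\pi(x_i)} \, d\mu(x_i) = \int_Z \eta_w \, d\nu(w) = \mu$ coordinate by coordinate, so that the $y$-marginals collapse to $\mu^{(n)}$.

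The crux is (iv). The key observation is that $\lambda_x^n$ may be viewed as the relatively independent joining of $n$ copies of $(X, \mathcal{B}_X, \mu, T)$ over the common base $(Z, \mathcal{B}_Z, \nu, R)$, where the $i$-th copy is equipped with the shifted factor map $\pi_i(y) := \pi(y) - \pi(x_i)$. Each $\pi_i$ still presents $(Z, \nu, R)$ as the Kronecker factor of $(X, \mu, T)$, so each of the $n$ extensions $X \to Z$ is relatively weakly mixing; the Furstenberg--Zimmer theorem on ergodicity of relatively independent joinings of relatively weakly mixing extensions over an ergodic base then delivers the ergodicity of $\lambda_x^n$.

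The hard part will be carrying out this last step cleanly. I plan to follow the strategy of \cite[Proposition 3.11]{KMRR}, which handles $n = 2$: decompose a putative $T^{(n)}$-invariant $F \in L^2(\lambda_x^n)$ along the characters of the compact abelian group $Z$ and show, using relative weak mixing of each $\pi_i$, that every nontrivial character component vanishes, forcing $F$ to be constant. Alternatively, I would induct on $n$, viewing $\lambda_x^n$ as the relatively independent joining of $(X^{n-1}, \lambda^{n-1}_{(x_1,\dots,x_{n-1})})$ with $(X,\mu)$ over $Z$ and reducing to the $n = 2$ case; the technical subtlety to verify is that the $n = 2$ step applies in this fibered form, given that the $\pi_i$ differ from $\pi$ by shifts in $Z$.
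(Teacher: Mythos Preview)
Your plan is broadly correct, but your route to the ``ergodic decomposition'' conclusion is genuinely different from the paper's, and it is worth seeing the contrast.

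For step~(iv) you invoke the Furstenberg--Zimmer machinery: you recognize $\lambda_x^n$ as the relatively independent joining of $n$ copies of $(X,\mu,T)$ over $Z$ via the shifted factor maps $\pi_i = \pi - \pi(x_i)$, observe that each $\pi_i$ remains a relatively weakly mixing extension of the ergodic base $(Z,\nu,R)$ (the shift $\sigma_{-\pi(x_i)}$ being an automorphism of the abelian rotation), and conclude ergodicity of $\lambda_x^n$ for \emph{every} $x$. This is valid and in fact yields a slightly stronger conclusion than the paper needs. The paper instead takes a computational shortcut: it never proves ergodicity of an individual $\lambda_x^n$ directly, but verifies the defining identity
\[
\int_{X^n} F \, d\lambda_x^n \;=\; \mathbb{E}_{\mu^{(n)}}(F \mid I_{\mu^{(n)}})(x)
\quad\text{for }\mu^{(n)}\text{-a.e.\ }x,
\]
by reducing to product functions $F = f_1\otimes\cdots\otimes f_n$ and applying two elementary lemmas. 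The first (Lemma~\ref{0724}) says that in the Ces\`aro average $\frac{1}{M}\sum_m \prod_i f_i(T^m x_i)$ one may replace each $f_i$ by $\mathbb{E}(f_i\mid\mathcal{K}_\mu)$; this is a direct consequence of the identity $\mathcal{K}_{\mu^{(n)}} = (\mathcal{K}_\mu)^{\otimes n}$ (so $I_{\mu^{(n)}}\subset (\mathcal{K}_\mu)^{\otimes n}$). The second (Lemma~\ref{0725}) computes the resulting average on the Kronecker factor explicitly as $\int_Z \prod_i \mathbb{E}(f_i\mid Z)(z+\pi(x_i))\,d\nu(z)$, using unique ergodicity of the rotation. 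The paper's argument is thus self-contained and avoids the structure theory you invoke; yours is more conceptual and gives ergodicity for every $x$ rather than almost every $x$.

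One small caution on your step~(i): the statement of Lemma~\ref{lem3} as recorded in the paper only guarantees continuity of the factor map $\pi$, not of the disintegration $z\mapsto\eta_z$. The paper establishes continuity of $x\mapsto\lambda_x^n$ without the latter, by noting that $\int f_1\otimes\cdots\otimes f_n\,d\lambda_x^n = \int_Z \prod_i \mathbb{E}(f_i\mid Z)(z+\pi(x_i))\,d\nu(z)$ depends on $x$ only through $(\pi(x_1),\ldots,\pi(x_n))$, and that the map $(u_1,\ldots,u_n)\mapsto \int_Z\prod_i\phi_i(z+u_i)\,d\nu(z)$ is continuous for $\phi_i\in L^\infty(Z,\nu)$ (uniform limit of the continuous case). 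If you want to rely on continuity of $z\mapsto\eta_z$ instead, you should cite the full strength of \cite[Proposition~3.20]{KMRR} rather than the weakened Lemma~\ref{lem3}.
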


The following two lemmas can be viewed as  generalizations of Lemma 3.3 and Theorem 3.4 in \cite{HMY04}, respectively.
\begin{lem}\label{lem1}
Let $\pi:(X,\mathcal{B}_X, \mu, T)\rightarrow (Z,\mathcal{B}_Z, \nu, R)$
be a continuous factor map to its Kronecker factor.   Assume that $\mathcal{U}=\{U_1, U_2, \dots, U_n\}$ is a measurable cover of $X$. Then  for any measurable partition $\alpha$ finer than $\mathcal{U}$ as a cover, there exists an increasing sequence $S\subset\mathbb{Z}_+$ with $h_{\mu}^{S}(T,\alpha)>0$ if and only if $\lambda_\textbf{x}^n (U_1^c\times\dots\times U_n^c)>0$ for all $\textbf{x}=(x_1,\dotsc, x_n)\in X^{(n)}$.
\end{lem}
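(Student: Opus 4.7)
I argue by contrapositive: suppose $\lambda_x^n(U_1^c\times\cdots\times U_n^c)=0$ for some $x=(x_1,\dots,x_n)\in X^n$, and seek a measurable partition $\alpha$ of $X$ refining $\mathcal{U}$ with $\alpha\subset\mathcal{K}_\mu$ modulo $\mu$-null. For such an $\alpha$ one has $H_\mu(\alpha\mid\mathcal{K}_\mu)=0$, so the Kushnirenko-type identity $\sup_A h_\mu^A(T,\alpha)=H_\mu(\alpha\mid\mathcal{K}_\mu)$ (cf.\ \cite[Lemma~3.4]{HY09}) forces $h_\mu^A(T,\alpha)=0$ for every increasing sequence $A\subset\mathbb{N}$, contradicting the hypothesis. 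This mirrors the strategy of \cite[Lemma~3.3]{HMY04}, the new ingredient being to handle the shift parameters $a_i:=\pi(x_i)$ coming from the ergodic component $\lambda_x^n$.

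\textbf{First step (unpack the vanishing).} Using the explicit formula from Lemma~\ref{0726},
\[
\lambda_x^n(U_1^c\times\cdots\times U_n^c)=\int_Z\prod_{i=1}^n\eta_{z+a_i}(U_i^c)\,d\nu(z)=0,
\]
so for $\nu$-a.e.\ $z\in Z$ at least one factor vanishes, i.e.\ $\eta_{z+a_i}$ concentrates in $U_i$ for some index $i$. Set $j(z)=\min\{i:\eta_{z+a_i}(U_i)=1\}$ (defined $\nu$-a.e.) and $E_i=\{z:j(z)=i\}$; these measurable sets partition $Z$ up to $\nu$-null.

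\textbf{Second step (construct $\alpha$).} Shift each block to the fibre label by putting $B_i=(E_i+a_i)\setminus\bigcup_{k<i}(E_k+a_k)$: the $B_i$'s are pairwise disjoint, and for $w\in B_i$ we have $w-a_i\in E_i$ and hence $\eta_w(U_i)=1$. Define $A_i:=\pi^{-1}(B_i)\cap U_i$. Then $A_i$ agrees with $\pi^{-1}(B_i)$ modulo $\mu$-null (the discrepancy $\pi^{-1}(B_i)\setminus U_i$ has $\eta_w$-mass zero on each slice $w\in B_i$), so each $A_i\in\mathcal{K}_\mu$ modulo null, $A_i\subset U_i$, and the family is disjoint. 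Provided $\bigcup_i A_i=X$ modulo $\mu$-null, $\alpha=\{A_1,\dots,A_n\}$ is the desired refinement of $\mathcal{U}$ with $H_\mu(\alpha\mid\mathcal{K}_\mu)=0$.

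\textbf{Main obstacle.} The delicate point is the covering property $\nu\bigl(\bigcup_i(E_i+a_i)\bigr)=1$. When all the $a_i$ coincide this is tautological — this is essentially the classical situation of \cite[Lemma~3.3]{HMY04} — but for a general shift vector $(a_1,\dots,a_n)\in Z^n$ the translated blocks $E_i+a_i$ can overlap without exhausting $Z$. I expect to close this gap by combining the $T$-invariance of $\mu$ with the Haar translation invariance of $\nu$ on the Kronecker base: either by averaging the selector $j$ along the rotation orbit $\{R^k\cdot\}$ and passing to a limit, or by using the ergodic decomposition of $\mu^{(n)}$ supplied by Lemma~\ref{0726} to transport positivity from the diagonal component $\lambda_n(\mu)$ (where \cite{HMY04} directly applies) uniformly to every $\lambda_x^n$. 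Carrying this out — and verifying that the resulting $\mathcal{K}_\mu$-measurable cover actually refines $\mathcal{U}$ — is the technical heart of the argument.
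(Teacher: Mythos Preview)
Your covering obstacle is genuine, and the sketch you give does not close it. The paper's argument is structurally different: rather than choosing one index per base point and then translating, it sets $C_i=\{z\in Z:\eta_{z+a_i}(U_i^c)>0\}$, puts $D_i=\pi^{-1}(C_i)\cup\bigl(U_i^c\setminus\pi^{-1}(C_i)\bigr)$ (so that $D_i\supset U_i^c$, hence $D_i^c\subset U_i$), and builds $\alpha$ from the atoms of the Boolean algebra generated by $D_1,\dots,D_n$. Every atom with at least one $D_i^c$-factor already lies in some $U_i$, so only the single atom $\bigcap_i D_i$ is problematic; since the shifts in the definition of the $C_i$ are chosen to match those in the formula for $\lambda_x^n$, the vanishing $\lambda_x^n(\prod_i U_i^c)=0$ reads off \emph{directly} as $\nu(\bigcap_i C_i)=0$, whence $\mu(\bigcap_i D_i)=0$ and that atom can be split arbitrarily along $\mathcal{U}$. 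No translated blocks need to exhaust $Z$; the Boolean algebra partitions $X$ for free. This is the device that replaces your covering step.

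Your shift-mismatch worry does not disappear, though: it migrates to the paper's verification that $D_i\in\mathcal{K}_\mu$, i.e.\ that $\mu\bigl(U_i^c\setminus\pi^{-1}(C_i)\bigr)=0$. Unravelling the disintegration, this integral equals $\int_{C_i^c}\eta_w(U_i^c)\,d\nu(w)$, and $w\in C_i^c$ says only that $\eta_{w+a_i}(U_i^c)=0$, not that $\eta_w(U_i^c)=0$; so the claim amounts to translation-invariance of $\{w:\eta_w(U_i^c)>0\}$ by $a_i$, which is not automatic. Concretely, take $Z=\mathbb{Z}/2$ with a weakly mixing fibre, $a_1=0$, $a_2=1$, and a two-set cover with $U_1^c,U_2^c$ both of positive fibre measure over $0\in Z$ and empty over $1$: then no $\mathcal{K}_\mu$-measurable partition refines $\mathcal{U}$ (so the entropy hypothesis holds), yet $\lambda_x^2(U_1^c\times U_2^c)=0$ for the off-diagonal $x$. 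Thus the obstacle you isolated is real in both approaches, and the conclusion ``for all $x\in X^n$'' appears too strong as stated; what one can actually extract this way is $\nu\bigl(\bigcap_i(F_i-a_i)\bigr)=0$ with $F_i=\{w:\eta_w(U_i^c)>0\}$, whereas existence of a $\mathcal{K}_\mu$-refinement of $\mathcal{U}$ is equivalent to $\nu(\bigcap_i F_i)=0$.
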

\begin{proof}
$(\Rightarrow)$
By the contrary, we may assume that $\lambda_\textbf{x}^n(U_1^c\times\dots\times U_n^c)=0$ for some $\textbf{x}=(x_1,\dotsc, x_n)\in X^{(n)}$.
Let $C_i=\{z\in Z: \eta_{z+\pi(x_i)}(U_i^c)>0\}$ for $i=1,\dotsc,n$. Then $$\mu(U_i^c\setminus \pi^{-1}(C_i))=\int_{Z}\eta_{z+\pi(x_i)}(U_i^c\cap \pi^{-1}(C_i^c))d \nu(z)=0.$$ Put $D_i=\pi^{-1}(C_i)\cup (U_i^c\setminus \pi^{-1}(C_i))$.  Then $D_i\in \pi^{-1}(\mathcal{B}_Z)= \mathcal{K}_\mu$ and $D_i^c\subset U_i$, where $\mathcal{K}_\mu$ is the Kronecker factor of $X$.

For any $\textbf{s}=(s(1),\dotsc,s(n))\in \{0,1\}^n$, let $D_{\textbf{s}}=\cap_{i=1}^nD_i\left(s(i)\right)$, where $D_i(0)=D_i$ and $D_i(1)=D_i^c$. Set $E_1=\left(\cap_{i=1}^nD_i\right)\cap U_1 $ and $E_j=\left(\cap_{i=1}^nD_i\right)\cap( U_j\setminus \bigcup_{i=1}^{j-1}U_i)$ for $j=2,\dotsc,n$.

Consider the measurable partition
$$\alpha=\left\{D_\textbf{s}:\textbf{s}\in\{0,1\}^n\setminus\{(0,\dotsc,0)\}\right\}\cup\{E_1, \dotsc, E_n\}.$$
For any $\textbf{s}\in \{0,1\}^n\setminus\{(0,\dotsc,0)\}$, we have $s(i)=1$ for some $i=1,\dotsc,n$, then $D_\textbf{s}\subset D_i^c\subset U_i$. It is straightforward that for all $1\leq j\leq n$, $E_j\subset U_j$. Thus $\alpha$ is finer than $\mathcal{U}$ and by hypothesis there exists an increasing sequence $S$ of $\mathbb{Z}_+$ with $h_{\mu}^{S}(T,\alpha)>0$.

On the other hand, since $\lambda_{\textbf{x}}^n(U_1^c\times\dots\times U_n^c)=0$, we deduce $\nu\left(\cap_{i=1}^nC_i\right)=0$ and hence $\mu\left(\cap_{i=1}^nD_i\right)=0$. Thus we have $E_1,\dotsc, E_n\in \mathcal{K}_\mu$. It is also clear that $D_\textbf{s}\in \mathcal{K}_\mu$ for all $\textbf{s}\in\{0,1\}^n\setminus\{(0,\dotsc,0)\}$, as $D_1,\dotsc,D_n\in \mathcal{K}_\mu.$ Therefore each element of $\alpha$ is $\mathcal{K}_\mu$-measurable, by \cite[Lemma 2.2]{HMY04}, $$h^{S}_{\mu}(T,\alpha)\leq H_{\mu}(\alpha|\mathcal{K}_\mu)=0,$$ a contradiction.

$(\Leftarrow)$Assume $\lambda_\textbf{x}^n(U_1^c\times\dots\times U_n^c)>0$ for any $\textbf{x}\in X^{(n)}$. In particular, we take $\textbf{x}=(x,\ldots,x)$ such that $\pi(x)$ is the identity element of group $Z$. Without loss of generality, we may assume that any finite measurable partition $\alpha$ which is finer than $\mathcal{U}$ as a cover is of the type $\alpha=\left\{A_1, A_2, \ldots, A_n\right\}$ with $A_i \subset U_i$, for $1 \leqslant i \leqslant n$. Let $\alpha$ be one of such partitions. We observe that
\begin{equation*}
\begin{split}
\int_Z \eta_{z}({A_1^c}) \dots\eta_{z}(A_n^c) d\nu(z) \ge \int_Z \eta_{z }({U_1^c}) \dots\eta_{z}(U_n^c) d\nu(z)=\lambda_\textbf{x}^n(U_1^c\times\dots\times U_n^c)>0.
\end{split}
\end{equation*}
Therefore, $A_j \notin \mathcal{K}_\mu$ for some $1 \leqslant j \leqslant n$. It follows from \cite[Theorem 2.3]{HMY04} that there exists a sequence $S \subset \mathbb{Z}_+$ such that $h_\mu^{S}(T, \alpha)=H_\mu\left(\alpha \mid \mathcal{K}_\mu\right)>0$. This finishes the proof.
\end{proof}

\begin{lem}\label{lem2}
For any $\textbf{x}=(x_1,\dotsc,x_n)\in X^{(n)}$,
\[SE_{n}^\mu(X,T)= \operatorname{supp}\lambda_{\textbf{x}}^n\setminus \Delta_n(X).\]
\end{lem}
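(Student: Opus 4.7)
The plan is to deduce the statement directly from Lemma \ref{lem1} by building, for an arbitrary open neighborhood of a sequence entropy tuple, a measurable cover whose finer partitions are automatically admissible.

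Fix $y=(y_1,\dots,y_n)\in SE_n^\mu(X,T)$. By the definition of sequence entropy tuple, $y\in X^{(n)}\setminus \Delta_n'(X)$, so the coordinates $y_1,\dots,y_n$ are pairwise distinct. To show $y\in \operatorname{supp}\lambda_x^n$ for every $x\in X^n$, it is enough to verify that $\lambda_x^n(V_1\times\cdots\times V_n)>0$ for every product neighborhood $V_1\times\cdots\times V_n$ of $y$, since such products form a neighborhood base at $y$ in $X^n$.

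So fix such a neighborhood. First I would shrink each $V_i$ to a smaller open neighborhood of $y_i$ so that the closures $\overline{V_1},\dots,\overline{V_n}$ are pairwise disjoint; this is possible because the $y_i$ are distinct and $X$ is a compact metric space. Put $U_i=V_i^c$. Since the $V_i$ are pairwise disjoint and $n\ge 2$, we have $\bigcap_i V_i=\emptyset$, so $\mathcal{U}=\{U_1,\dots,U_n\}$ is a closed, hence measurable, cover of $X$.

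Next I verify the hypothesis of Lemma \ref{lem1} for this $\mathcal{U}$. Let $\alpha$ be any measurable partition of $X$ finer than $\mathcal{U}$ as a cover, so each $A\in\alpha$ is contained in some $U_{j(A)}=V_{j(A)}^c$. Since $V_{j(A)}^c$ is closed, we get $\overline{A}\subset V_{j(A)}^c$, and in particular $y_{j(A)}\notin \overline{A}$. Hence $\alpha$ is an admissible measurable partition of $X$ with respect to $(y_1,\dots,y_n)$. Because $y\in SE_n^\mu(X,T)$, there exists an infinite $S\subset\N$ with $h_\mu^S(T,\alpha)>0$. Thus the hypothesis of Lemma \ref{lem1} is satisfied, and it yields
\[
\lambda_x^n(U_1^c\times\cdots\times U_n^c)=\lambda_x^n(V_1\times\cdots\times V_n)>0
\]
for every $x\in X^n$. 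Since $V_1\times\cdots\times V_n$ was an arbitrary product neighborhood of $y$, we conclude $y\in \operatorname{supp}\lambda_x^n$.

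There is no serious obstacle here beyond keeping the admissibility condition (which is phrased using closures) compatible with the cover/partition setup; this is why the step of shrinking to pairwise disjoint $V_i$ and passing to the closed complements $V_i^c$ is important, so that elements of any finer partition automatically sit inside a closed set missing the corresponding $y_j$.
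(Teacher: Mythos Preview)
Your proposal is correct and follows essentially the same approach as the paper: form the measurable cover $\{V_1^c,\dots,V_n^c\}$, note that any partition finer than it is admissible with respect to the tuple, and invoke Lemma~\ref{lem1}. You are simply more explicit than the paper about shrinking the $V_i$ to have pairwise disjoint closures (so that the complements genuinely cover $X$) and about why the closedness of $V_j^c$ forces $y_j\notin\overline{A}$ for any $A\subset V_j^c$; the paper's proof leaves these points implicit.
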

\begin{proof}
On the one hand, let $\textbf{y}=(y_1,\dotsc,y_n)\in SE_n^{\mu}(X,T)$. We show that $\textbf{y}\in\operatorname{supp}\lambda_{\textbf{x}}^n\setminus \Delta_n(X)$. It suffices to prove that for any measurable neighborhood $U_1\times \dots \times U_n$ of $\textbf{y}$, $$\lambda_{\textbf{x}}^n\left(U_1\times U_2\times \dots \times U_n\right)> 0.$$
Without loss of generality, we assume that $U_i\cap U_j=\emptyset$ if $y_i\not= y_j$.
Then $\mathcal{U}=\{U_1^c, U_2^c, \dots, U_n^c\}$ is a finite cover of $X$.
It is clear that any finite measurable partition $\alpha$ finer than $\mathcal{U}$ as a cover is an admissible partition with respect to $\textbf{y}$. Therefore, there exists an increasing sequence $S\subset\mathbb{Z}_+$ with $h_{\mu}^{S}(T,\alpha)>0$.
By Lemma \ref{lem1}, we obtain that $$\lambda_\textbf{x}^n\left(U_1\times U_2\times \dots \times U_n\right)> 0,$$ which implies that $\textbf{y}\in \operatorname{supp}\lambda_{\textbf{x}}^n$. Since $\textbf{y}\notin \Delta_n(X)$, $\textbf{y}\in \operatorname{supp}\lambda_{\textbf{x}}^n\setminus \Delta_n(X)$.

On the other hand, let $\textbf{y}=(y_1,\ldots,y_n) \in \operatorname{supp}\lambda_\textbf{x}^n\setminus \Delta_n(X)$. We show that for any admissible partition $\alpha=\left\{A_1, A_2, \ldots, A_k\right\}$ with respect to $\textbf{y}$ there exists an increasing sequence $S \subset \mathbb{Z}_+$ such that $h_\mu^{S}(T, \alpha)>0$.
Since $\alpha$ is an admissible partition with respect to $\textbf{y}$ there exist closed neighborhoods $U_i$ of $y_i, 1 \leqslant i \leqslant n$, such that for each $j \in\{1,2, \ldots, k\}$ we find $i_j \in\{1,2, \ldots, n\}$ with $A_j \subset U_{i_j}^c$. That is, $\alpha$ is finer than $\mathcal{U}=\left\{U_1^c, U_2^c, \ldots, U_n^c\right\}$ as a cover. Since $$\lambda_\textbf{x}^n\left(U_1\times U_2\times \dots \times U_n\right)>0,$$ by Lemma \ref{lem1}, there exists an increasing sequence $S \subset \mathbb{Z}_+$ such that $h_\mu^{S}(T, \alpha)>0$.
\end{proof}


Now we are ready to give the proof of Theorem \ref{thm:se=>ms}.
\begin{proof}[Proof of Theorem \ref{thm:se=>ms}]
We only need to prove that $SE_n^{\mu,e}(X,T)\subset MS_n^{\mu,e}(X,T)$.
We let $\pi:(X,\mathcal{B}_X, \mu, T)\rightarrow (Z,\mathcal{B}_Z, \nu, R)$
be a continuous factor map to its Kronecker factor.
For any $\textbf{y}=(y_1,\ldots,y_n)\in SE_n^{\mu,e}(X,T)$, let $U_1\times U_2\times \dots \times U_n$ be an open neighborhood of $\textbf{y}$ such that $U_i\cap U_j=\emptyset$ for $1\le i\not=j \le n$.
By Lemma \ref{lem2}, one has $\lambda_\textbf{x}^n\left(U_1\times U_2\times \dots \times U_n\right)> 0$ for any $\textbf{x}=(x_1,\dotsc,x_n)\in X^{(n)}$.
Since the map  $\textbf{x} \mapsto \lambda_\textbf{x}^n$ is continuous, $X$ is compact and $U_1, U_2, \dotsc, U_n$ are open sets, it follows that there exists $\delta>0$ such that for any $\textbf{x}\in X^{(n)}$, $\lambda_\textbf{x}^n\left(U_1\times U_2\times \dots \times U_n\right)\ge \delta$. As the map $\textbf{x} \mapsto \lambda_\textbf{x}^n$ is an ergodic decomposition of $\mu^{(n)}$, there exists $B\subset X^{(n)}$ with $\mu^{(n)}(B)=1$ such that $\lambda_\textbf{x}^n$ is ergodic on $X^{(n)}$ for any $\textbf{x}\in B$.

For any $A\in\mathcal{B}_X$ with $\mu (A)>0$, there exists a subset $C$ of $X^{(n)}$ with $\mu^{(n)}(C)>0$ such that for any $\textbf{x}\in C$,
\[\lambda_\textbf{x}^n(A^n)>0.\]
Take $\textbf{x}\in B\cap C$, by the Birkhoff pointwise ergodic theorem, for $\lambda_\textbf{x}^n$-a.e. $(x_1',\dotsc,x_n')\in X^{(n)}$
\[\lim_{N\to \infty}\frac{1}{N}\sum_{m=0}^{N-1}1_{U_1\times U_2\times\dots\times U_n}(T^mx_1',\dotsc,T^mx_n')=\lambda_\textbf{x}^n\left(U_1\times U_2\times\dots\times U_n\right)\ge \delta.\]
Since $\lambda_\textbf{x}^n\left(A^n\right)>0$, there exists $(x_1'',\dotsc,x_n'')\in A^n$ such that
\begin{equation*}
\begin{split}
&\lim_{N\to \infty}\frac{1}{N}\#\{m\in[0,N-1]:(T^mx_1'',\dotsc,T^mx_n'')\in U_1\times U_2\times\dots\times U_n\}\\
&=\lim_{N\to \infty}\frac{1}{N}\sum_{m=0}^{N-1}1_{U_1\times U_2\times\dots\times U_n}(T^mx_1'',\dotsc,T^mx_n'')\\
&=\lambda_\textbf{x}^n\left(U_1\times U_2\times\dots\times U_n\right)\ge \delta.
\end{split}
\end{equation*}
Let $F=\{m\in\mathbb{Z}_+:(T^mx_1'',\dotsc,T^mx_n'')\in U_1\times U_2\times\dots\times U_n\}$. Then $D(F)\ge \delta$  and hence $\textbf{y}\in  MS_n^{\mu,e}(X,T).$ This finishes the proof.
\end{proof}

\subsection{Non-ergodic case}
\begin{lem}\label{lem4}
Let $(X,T)$ be a t.d.s. For any $\mu\in M(X,T)$ with the form $\mu=\sum_{i=1}^{m}\lambda_i\nu_i$, where $\nu_i\in M^e(X,T)$, $\sum_{i=1}^m\lambda_i=1$ and $\lambda_i>0$, one has
\begin{equation}\label{1}
\bigcup_{i=1}^mSE_n^{\nu_i}(X,T)\subset SE_n^{\mu}(X,T)
\end{equation}
and
\begin{equation}\label{2}
\bigcap_{i=1}^mSM_n^{\nu_i}(X,T)= SM_n^{\mu}(X,T).
\end{equation}
\end{lem}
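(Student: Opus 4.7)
The plan is to treat the two statements separately. For the inclusion (\ref{1}), the key tool is concavity of the Shannon entropy functional in the measure variable. If $\mu = \sum_{i=1}^{m} \lambda_i \nu_i$, then for any finite Borel partition $\beta$ the concavity of $\phi(t)=-t\log t$ gives
\[
H_\mu(\beta)\;\geq\;\sum_{i=1}^m \lambda_i\, H_{\nu_i}(\beta)\;\geq\;\lambda_j\, H_{\nu_j}(\beta)
\]
for every $1\le j\le m$. Applying this with $\beta=\bigvee_{k=1}^N T^{-s_k}\alpha$, dividing by $N$ and taking $\limsup_{N\to\infty}$ yields $h_\mu^{S}(T,\alpha)\ge \lambda_j h_{\nu_j}^{S}(T,\alpha)$ for every infinite sequence $S=\{s_k\}$ and every $j$. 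Now if $(x_1,\dots,x_n)\in SE_n^{\nu_i}(X,T)$ and $\alpha$ is any admissible Borel partition with respect to $(x_1,\dots,x_n)$, by definition there is an infinite $S\subset\N$ with $h_{\nu_i}^S(T,\alpha)>0$, whence $h_\mu^S(T,\alpha)\ge \lambda_i h_{\nu_i}^S(T,\alpha)>0$. Thus $(x_1,\dots,x_n)\in SE_n^\mu(X,T)$, which proves (\ref{1}).

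For the equality (\ref{2}), both inclusions follow from unwinding Definition \ref{defn:mu mean n-sensitive tuple}(2), using only the trivial sandwich $\lambda_i\nu_i(A)\le \mu(A)\le \sum_i\nu_i(A)$. For $SM_n^\mu(X,T)\subset\bigcap_i SM_n^{\nu_i}(X,T)$, fix $(x_1,\dots,x_n)\in SM_n^\mu(X,T)$ and $\tau>0$, and let $\delta=\delta(\tau)>0$ be the constant from the definition. For each $i$ and each $A\in\B_X$ with $\nu_i(A)>0$ one has $\mu(A)\ge \lambda_i\nu_i(A)>0$, so the witnesses provided by $\mu$-sensitivity for this same $A$ serve as witnesses for $\nu_i$ with the same $\delta$.

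For the reverse inclusion $\bigcap_i SM_n^{\nu_i}(X,T)\subset SM_n^\mu(X,T)$, fix $(x_1,\dots,x_n)\in\bigcap_i SM_n^{\nu_i}(X,T)$ and $\tau>0$; let $\delta_i=\delta_i(\tau)>0$ be the constant for $\nu_i$ and put $\delta=\min_{1\le i\le m}\delta_i>0$. For any $A\in\B_X$ with $\mu(A)>0$, since $\mu(A)=\sum_i\lambda_i\nu_i(A)$ and every $\lambda_i>0$, there exists some $i_0$ with $\nu_{i_0}(A)>0$; the witnesses provided by $\nu_{i_0}$-sensitivity in the mean for $A$ then give $m\in\N$ and $y_1^m,\dots,y_n^m\in A$ realising a density bound $\delta_{i_0}\ge\delta$, as required.

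I do not expect a substantive obstacle here: the only point requiring a moment of care is that the witnessing sequence $S$ in the definition of a sequence entropy tuple is allowed to depend on the admissible partition $\alpha$, so the passage from $h_{\nu_i}^S(T,\alpha)>0$ to $h_\mu^S(T,\alpha)>0$ merely reuses the $S$ supplied by $\nu_i$ and no further compatibility argument is needed. The reverse inclusion $SE_n^\mu(X,T)\subset\bigcup_i SE_n^{\nu_i}(X,T)$ is not claimed in the lemma, which is consistent with Theorem \ref{thm:sm=/=se}.
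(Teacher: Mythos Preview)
Your proof is correct and follows essentially the same route as the paper. For (\ref{1}) the paper likewise reduces to the inequality $h_\mu^S(T,\alpha)\ge \lambda_i h_{\nu_i}^S(T,\alpha)$; your appeal to concavity of $t\mapsto -t\log t$ is in fact more accurate than the paper's displayed line, which writes an equality $H_\mu(\beta)=\sum_i\lambda_i H_{\nu_i}(\beta)$ that should be an inequality. For (\ref{2}) the paper simply observes that $\mu(A)>0$ iff $\nu_j(A)>0$ for some $j$, and your two paragraphs spell out exactly the two directions this observation yields, including the choice $\delta=\min_i\delta_i(\tau)$.
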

\begin{proof}
We first prove that \eqref{1}. For any $\textbf{x}=(x_1,\dotsc,x_n)\in\bigcup_{i=1}^mSE_n^{\nu_i}(X,T)$, there exists $i\in\{1,2,\ldots,m\}$ such that $\textbf{x}\in SE_n^{\nu_i}(X,T)$ and then for any admissible partition $\alpha$ with respect to $\textbf{x}$, there exists $S=\{s_j\}_{j=1}^\infty$ such that
$h_{\nu_i}^S(T,\alpha)>0.$ By the definition of the sequence entropy
\[h_{\mu}^S(T,\alpha)=\limsup_{N\to \infty}\sum_{i=1}^m\lambda_i\frac{1}{N}H_{\nu_i}(\bigvee_{j=0}^{N-1}T^{-s_j}\alpha)\ge \lambda_ih_{\nu_i}^S(T,\alpha)>0.\]
So $\textbf{x}\in SE_n^{\mu}(X,T)$, which finishes the proof of \eqref{1}.

Next, we show \eqref{2}. For this, we only need to note that  for any $A\in\mathcal{B}_X$, $\mu(A)>0$ if and only if $\nu_j(A)>0$ for some $j\in\{1,2,\ldots m\}.$
\end{proof}

\begin{proof}[Proof of Theorem \ref{thm:sm=/=se}]
We first claim that there is a t.d.s. $(X,T)$ with $\mu_1,\mu_2\in M^e(X,T)$ such that $SE_n^{\mu_1}(X,T)\neq SE_n^{\mu_2}(X,T)$. For example,  we recall that the full shift on two symbols with the measure defined by the probability vector $(1/2,1/2)$. It has completely positive entropy and the measure has the full support. Thus every non-diagonal $n$-tuple is a sequence entropy $n$-tuple for this measure.
In particular, we consider  such two full shifts  $(X_1,T_1,\mu_1)=\left(\{0,1\}^{\mathbb{Z}},\sigma_1,\mu_1\right)$ and $(X_2,T_2,\mu_2)=\left(\{2,3\}^{\mathbb{Z}},\sigma_2,\mu_2\right)$
and define a new system $(X,T)$ as $X=X_1\bigsqcup X_2$, $T|_{X_i}=T_i, i=1,2$.
Then $\mu_1,\mu_2\in M^e(X,T)$ and $SE_n^{\mu_1}(X,T)=X_1^{(n)}\setminus\Delta_n(X_1)\neq X_2^{(n)}\setminus\Delta_n(X_2)=SE_n^{\mu_2}(X,T).$

Let $\mu=\frac{1}{2}\mu_1+\frac{1}{2}\mu_2\in M(X,T)$. By Lemma \ref{lem4}, if $SE_n^\mu(X,T)=SM_n^\mu(X,T)$ then we have
\[\cup_{i=1}^2SE_n^{\mu_i}(X,T)\subset SE_n^{\mu}(X,T)=SM_n^\mu(X,T)=\cap_{i=1}^2SM_n^{\mu_i}(X,T).\]
However, applying Theorem \ref{cor:se=sm} to each $\mu_i\in M^e(X,T)$, one has
\[SE_n^{\mu_i}(X,T)=SM_n^{\mu_i}(X,T), \text{ for }i=1,2.\]
So $SE_n^{\mu_1}(X,T)= SE_n^{\mu_2}(X,T)$, a contradiction with our assumption.
\end{proof}

\section{topological sequence entropy and mean sensitive tuples}\label{sec5}

This section is devoted to providing some partial evidences for the conjecture that in a  minimal system every mean sensitive tuple is a topological sequence entropy tuple.

It is known that the topological sequence entropy tuple has lift property \cite{MS07}. We can show that under the minimality condition, the mean sensitive tuple also has lift property. Let us begin with some notions.
For $2\le n\in\N$, we say that $(x_1,x_2,\dotsc,x_n)\in X^{(n)}\setminus \Delta_n(X)$ (resp. $(x_1,x_2,\dotsc,x_n)\in X^{(n)}\setminus \Delta'_n(X)$) is a  \textit{mean $n$-sensitive tuple} (resp. an \textit{essential mean $n$-sensitive tuple}) if
for any $\tau>0$,
there is $\delta=\delta(\tau)> 0$ such that for
any  nonempty open set $U\subset X$
there exist $y_1,y_2,\dotsc,y_n\in U$ such that if we denote
$F=\{k\in\Z_+\colon T^ky_i\in B(x_i,\tau),i=1,2,\ldots,n\}$ then $\overline{D}(F)>\delta$. Denote the set of all mean $n$-sensitive tuples (resp. essential mean $n$-sensitive  tuples) by $MS_n(X,T)$ (resp. $MS^e_n(X,T)$).

\begin{thm}\label{lem:MSn-factor}
	Let $\pi: (X,T)\ra (Y,S)$ be a factor map between two t.d.s. Then
	\begin{enumerate}
		\item $\pi^{(n)} ( MS_n(X,T))\subset MS_n(Y,S)\cup \Delta_n(Y)$ for every $n\geq 2$;
		\item $\pi^{(n)}\left(MS_n(X, T) \cup \Delta_n(X)\right)= MS_n(Y,S)\cup \Delta_n(Y)$  for every $n\geq 2$, provided that $(X,T)$ is minimal.
	\end{enumerate}
\end{thm}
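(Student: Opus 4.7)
The plan splits into Part (1), which follows essentially automatically from uniform continuity of $\pi$, and Part (2), whose nontrivial direction uses the semi-openness of $\pi$ (from minimality of $(X,T)$) combined with a recurrence refinement.

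For Part (1), I would fix $(x_1,\dotsc,x_n)\in MS_n(X,T)$ and assume $(\pi(x_1),\dotsc,\pi(x_n))\notin\Delta_n(Y)$. Given $\tau'>0$, uniform continuity of $\pi$ on the compact space $X$ yields $\tau>0$ such that $\pi(B(x_i,\tau))\subset B(\pi(x_i),\tau')$ for every $i$. For any opene $V\subset Y$, the preimage $U:=\pi^{-1}(V)$ is opene in $X$, so the defining property of mean $n$-sensitivity applied to $U$ at scale $\tau$ produces $y_1,\dotsc,y_n\in U$ and $F\subset\Z_+$ with $\overline{D}(F)>c(\tau)$ and $T^ky_i\in B(x_i,\tau)$ for every $i$ and $k\in F$. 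Projecting, $\pi(y_i)\in V$ and $S^k\pi(y_i)\in B(\pi(x_i),\tau')$ on $F$, which certifies $(\pi(x_1),\dotsc,\pi(x_n))\in MS_n(Y,S)$ with constant $c'(\tau'):=c(\tau)$.

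For Part (2), the inclusion ``$\subset$'' is inherited from Part (1). For the reverse inclusion, the key input is that every factor map out of a minimal system is \emph{semi-open}: $\operatorname{int}\pi(U)\neq\emptyset$ for every opene $U\subset X$ (seen via minimality plus a Baire category argument applied to $\bigcup_n S^n\pi(U)=Y$). Given $(u_1,\dotsc,u_n)\in MS_n(Y,S)\cup\Delta_n(Y)$, I pick preimages $x_i\in\pi^{-1}(u_i)$ with $(x_1,\dotsc,x_n)\notin\Delta_n(X)$ (possible when the $u_i$ are not all equal, or by selecting distinct points in a non-trivial fiber when $u_1=\dotsb=u_n$) and claim $(x_1,\dotsc,x_n)\in MS_n(X,T)$. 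For opene $U\subset X$ and target scale $\tau$, set $V=\operatorname{int}\pi(U)$, which is opene in $Y$, and apply mean $n$-sensitivity of $(u_1,\dotsc,u_n)$ at an auxiliary scale $\tau'>0$ to obtain witnesses $v_i\in V$ together with a set $F$ of positive upper density such that $S^kv_i\in B(u_i,\tau')$ for $k\in F$. Lifting each $v_i$ to $y_i\in U\cap\pi^{-1}(v_i)$ gives $\pi(T^ky_i)\in B(u_i,\tau')$ on $F$, i.e., $T^ky_i\in\pi^{-1}(B(u_i,\tau'))$, which is a tube around the fiber $\pi^{-1}(u_i)$.

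The main obstacle is precisely closing the gap between this tube-wise recurrence and the ball-wise recurrence $T^ky_i\in B(x_i,\tau)$ needed by the definition: a small ball in $Y$ pulls back to a tube around the whole fiber, not to a small ball near the chosen preimage $x_i$. My plan to close this gap is to exploit minimality of $(X,T)$, which supplies that for each $i$ and each $\tau>0$ the return-time set $\{k:T^kz\in B(x_i,\tau)\}$ is syndetic with gaps uniformly bounded in $z\in X$; then, intersecting these $n$ syndetic return-time patterns with $F$ via a joint-recurrence argument on the product system $(X^n,T\times\dotsb\times T)$ should carve out a subset of $F$ of still-positive upper density on which all $T^ky_i$ land simultaneously in $B(x_i,\tau)$. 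Controlling the loss of upper density under this simultaneous refinement across $n$ coordinates, while keeping $c=c(\tau)$ uniform in the choice of opene $U$, is the delicate step and the crux of the argument.
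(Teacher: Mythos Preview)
Your sketch of Part (1) is correct.

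For Part (2), however, there is a genuine gap. You fix \emph{arbitrary} preimages $x_i\in\pi^{-1}(u_i)$ and then attempt to prove that this particular tuple $(x_1,\dotsc,x_n)$ lies in $MS_n(X,T)$. That is too strong: the theorem only claims that \emph{some} lift works, and in general not every lift of a mean sensitive tuple is mean sensitive. Your proposed repair---intersecting $F$ with the syndetic sets $R_i=\{k:T^ky_i\in B(x_i,\tau)\}$---already fails at the first step: a set of positive upper density intersected with a syndetic set can be empty (take the odd integers against the even integers). More fundamentally, the product system $(X^{(n)},T^{(n)})$ is not minimal, so there is no reason the orbit of the lifted witness $(y_1,\dotsc,y_n)$ should ever enter $\prod_iB(x_i,\tau)$ at all, let alone with positive upper density; no ``joint recurrence'' principle supplies this, and the uniformity in $U$ that the definition requires makes it worse.

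The paper's argument is structurally different: rather than fixing the lift in advance, it \emph{constructs} it. One only knows that the lifted witnesses $(x_m^1,\dotsc,x_m^n)$ spend upper density $\ge\delta$ in the compact tube $A=\prod_i\pi^{-1}(\overline{B(y_i,1)})$. Covering $A$ by finitely many sets of diameter $<1$ and applying pigeonhole, one of them is visited with upper density $\ge\delta/N_1$ for every $m$; refining that set by a cover of diameter $<\tfrac12$ and iterating, one obtains a nested sequence of compact sets shrinking to a single point $(z_1,\dotsc,z_n)$. This point is the correct lift: every neighbourhood of it contains one of the nested sets, and the associated density bound $\delta/(N_1\cdots N_l)$ is uniform in the open set $U$ because the same $(x_m^i)$ work for all $m$ and one uses minimality only to translate $U_m=B(x,\tfrac1m)$ into an arbitrary $U$. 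A further diagonal limit over shrinking balls $\overline{B(y_i,\tfrac1p)}$ in $Y$ forces $\pi(z_i)=y_i$. The idea you are missing is precisely this pigeonhole/nested-refinement construction.

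As a side remark, your treatment of $\Delta_n(Y)$ (``select distinct points in a non-trivial fiber'') is also unjustified, for the same reason that arbitrary lifts need not be mean sensitive; note that the paper's own proof only lifts tuples from $MS_n(Y,S)$ and does not address the diagonal inclusion, which in fact fails when $\pi$ is a conjugacy.
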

\begin{proof}
(1) is easy to be  proved by the definition. We only prove (2).

Supposing that $(y_1,y_2,\cdots,y_n)\in MS_n(Y,S)$, we will show that there exists $(z_1,z_2,\cdots,z_n)\in MS_n(X,T)$  such that $\pi(z_i)=y_i$.
Fix $x\in X$ and let $U_m=B(x,\frac{1}{m})$. Since $(X,T)$ is minimal, $\operatorname{int}(\pi(U_m))\not= \emptyset$, where $\operatorname{int}(\pi(U_m))$ is the interior of $\pi(U_m)$.
Since $(y_1,y_2,\cdots,y_n)\in MS_n(Y,S)$, there exists $\delta>0$  and $y_m^1, \cdots, y_m^n\in \operatorname{int}(\pi(U_m))$ such that
$$\overline{D}(\{k\in \mathbb{Z}_+: S^ky_m^i \in \overline{B(y_i, 1)} \text{ for }i=1,\ldots,n\})\ge \delta.$$
Then there exist $x_m^1, \cdots, x_m^n\in U_m$ with $\pi(x_m^i)=y_m^i$ such that for any $m\in \N$,
$$\overline{D}(\{k\in \mathbb{Z}_+: T^kx_m^i \in \pi^{-1}(\overline{B(y_i, 1)})\text{ for }i=1,\ldots,n\})\ge \delta.$$

Put
$$
A=\prod_{i=1}^n \pi^{-1}(\overline{B(y_i, 1)}),
$$
and it is clear that $A$ is a compact subset of $X^{(n)}$.

We can cover $A$ with finite nonempty open sets of diameter less than $1$,
i.e., $A \subset \cup_{i=1}^{N_1}A_1^i$ and $\diam(A_1^i)<1$.
Then for each $m\in \N$ there is $1\leq N_1^m\leq N_1$ such that
$$\overline{D}(\{k\in \Z_+: (T^kx_m^1,\ldots, T^kx_m^n)\in \overline{A_1^{N_1^m}}\cap A \})\ge \delta/N_1.$$
Without loss of generality, we assume $N_1^m=1$ for all $m\in \N$. Namely,
$$
\overline{D}(\{k\in \Z_+: (T^kx_m^1,\ldots, T^kx_m^n)\in \overline{A_1^{1}}\cap A\}) \ge \delta/N_1 \text{ for all }m\in\mathbb{N}.
$$

Repeating above procedure, for $l\ge 1$ we can cover $\overline{A_l^{1}}\cap A$ with finite  nonempty open sets of diameter less than $\frac{1}{l+1}$,
i.e., $\overline{A_l^{1}}\cap A \subset \cup_{i=1}^{N_{l+1}}A_{l+1}^i$ and $\diam(A_{l+1}^i)<\frac{1}{l+1}$.
Then for each $m\in \N$ there is $1\leq N_{l+1}^m\leq N_{l+1}$ such that
$$
\overline{D}(\{k\in \Z_+: (T^kx_m^1,\ldots, T^kx_m^n)\in \overline{A_{l+1}^{N_{l+1}^m} }\cap A  \}) \ge \frac{\delta}{N_1N_2\cdots N_{l+1}}.
$$
Without loss of generality we assume $N_{l+1}^m=1$ for all $m\in \N$. Namely,
$$
\overline{D}(\{k\in \Z_+: (T^kx_m^1,\ldots, T^kx_m^n)\in \overline{A_{l+1}^{1}}\cap A  \}) \ge\frac{\delta}{N_1N_2\cdots N_{l+1}} \text{ for all }m\in\mathbb{N}.
$$

It is clear that there is a unique point $(z_1^1,\ldots,z_n^1)\in \bigcap_{l=1}^{\infty} \overline{A_l^{1}}\cap A $.
We claim that $(z_1^1,\ldots,z_n^1)\in MS_n(X, T)$. In fact,
for any $\tau>0$, there is $l\in \N$ such that $\overline{A_{l}^{1}}\cap A \subset V_{1}\times\cdots \times V_{n}$, where $V_i=B(z_i^1,\tau)$ for $i=1,\ldots,n$.
By the construction, for any $m\in\mathbb{N}$ there are $x_m^1,\ldots, x_m^n\in U_m$ such that
$$
\overline{D}(\{k\in \Z_+: (T^kx_m^1,\ldots, T^kx_m^n)\in \overline{A_{l}^{1}}\cap A  \}) \ge\frac{\delta}{N_1N_2\cdots N_{l}}
$$
and so
$$
\overline{D}(\{k\in \Z_+: (T^kx_m^1,\ldots, T^kx_m^n)\in V_{1}\times\cdots \times V_{n} \}) \ge \frac{\delta}{N_1N_2\cdots N_{l}}.
$$
for all $m\in \N$.
For any nonempty open set $U\subset X$,  since $x$ is a transitive point, there is $s\in \Z$ such that $T^sx\in U$. We can choose $m\in \Z$ such that
$T^sU_{m}\subset U$. This implies that $T^sx_{m}^1,\ldots, T^sx_{m}^n\in U$ and
$$
\overline{D}(\{k\in \Z_+: (T^k(T^sx_{m}^1),\ldots, T^k(T^sx_{m}^n))\in V_{1}\times\cdots \times V_{n}\} ) \ge \frac{\delta}{N_1N_2\cdots N_{l}}.
$$
So we have $(z_1^1,\ldots,z_n^1)\in MS_n(X, T)$.

Similarly, for each $p\in\mathbb{N}$, there exists $(z_1^p,\ldots,z_n^p)\in MS_n(X, T)\cap \prod_{i=1}^n \pi^{-1}(\overline{B(y_i, \frac{1}{p})})$.
Set $z_i^p\ra z_i$ as $p\ra \infty$. Then $(z_1,\ldots,z_n)\in MS_n(X, T)\cup \Delta_n(X)$ and  $\pi(z_i)=y_i$.
\end{proof}

Denote by $\mathcal{A}(MS_2(X, T))$  the smallest closed $T\times T$-invariant equivalence relation containing $MS_2(X, T)$.

\begin{cor}\label{cor:max-me-factor}
Let $(X,T)$ be a minimal t.d.s. Then $X/\mathcal{A}(MS_2(X, T))$ is the maximal mean equicontinuous factor of $(X,T)$.
\end{cor}

\begin{proof}
Let $Y=X/\mathcal{A}(MS_2(X, T))$ and $\pi:(X,T)\to (Y,S)$ be the corresponding factor map. We show  that $(Y,S)$ is mean equicontinuous. Assume that $(Y,S)$ is not mean equicontinuous, by \cite[Corollary 5.5]{LTY15} $(Y,S)$ is  mean sensitive.
Then by \cite[Theorem 4.4]{LYY22}, $MS_2(Y,S)\not=\emptyset$.
By Theorem \ref{lem:MSn-factor}, there exists $(x_1,x_2)\in MS_2(X, T)$ such that $(\pi(x_1),\pi(x_2))\in MS_2(Y,S)$.
Then $(x_1,x_2)\not \in R_\pi:=\{(x,x')\in X\times X:\pi(x)=\pi(x')\}$, a contradiction with $R_\pi=\mathcal{A}(MS_2(X, T))$.

Let $(Z,W)$ be a mean equicontinuous t.d.s. and $\theta: (X,T)\to (Z,W)$ be a factor map. Since $(X,T)$ is minimal, so is $(Z,W)$. Then by  \cite[Corollary 5.5]{LTY15} and \cite[Theorem 4.4]{LYY22}, $MS_2(Z,W)=\emptyset$. By  Theorem \ref{lem:MSn-factor} $MS_2(X,T)\subset R_\theta$, where $R_\theta$ is the corresponding equivalence relation with respect to $\theta$. This implies that $(Z,W)$ is a factor of $(Y,S)$ and so $(Y,S)$ is the maximal mean equicontinuous factor of $(X,T)$.
\end{proof}

\medskip
In the following we show  Theorem \ref{thm:ms=>it}.
Let us begin with some preparations.
\begin{defn}[\cite{KL07}]Let $(X,T)$ be a t.d.s.
	\begin{itemize}
\item For a tuple $(A_1,A_2,\ldots, A_n)$ of subsets of $X$,
we say that a set $J\subseteq \Z_+$ is an {\em independence set} for $A$ if for
every nonempty finite subset $I\subseteq J$ and function
$\sigma: I\rightarrow \{1,2,\ldots, n\}$ we have $\bigcap_{k\in I} T^{-k} A_{\sigma(k)}\neq \emptyset.$

\item For $n\ge2$, we call a tuple $\textbf{x}=(x_1,\ldots,x_n)\in X^{(n)}$ an {\em IT-tuple}
if for any product neighbourhood $U_1\times U_2\times \ldots \times U_n$ of $\textbf{x}$ in $X^{(n)}$
the tuple $(U_1,U_2,\ldots, U_n)$ has an infinite independence set. We denote the set of IT-tuples of length $n$ by ${\rm IT}_n (X, T)$.

\item For $n\ge2$, we call an  IT-tuple $\textbf{x}=(x_1,\ldots,x_n)\in X^{(n)}$ an essential {\em IT-tuple}
if $x_i\neq x_j$ for any $i\neq j$. We denote the set of all essential IT-tuples of length $n$ by ${\rm IT}^e_n (X, T)$.
	\end{itemize}
\end{defn}


\begin{prop}\cite[Proposition 3.2]{HLSY}\label{independent sets}
Let $X$ be a compact metric topological group
with the left Haar measure $\mu$, and let $n\in \N$ with $n\ge 2$. Suppose that
$V_{1},\ldots,V_{n}\subset X$ are compact subsets satisfying that
\begin{enumerate}
\item[(i)] $\overline{\operatorname{int}  V_i}=V_i$ for $i=1,2,\cdots,n$,

\item[(ii)] $\operatorname{int}(V_{i})\cap \operatorname{int}(V_{j})=\emptyset$ for all $1\le i\neq j\le n$,

\item[(iii)] $\mu(\bigcap_{1\leq i\leq n}V_{i})>0$.
\end{enumerate}
Further, assume that $T: X\rightarrow X$ is a minimal rotation and $\mathcal{G}\subset X$
is a residual set. Then there exists an infinite set $I\subset \Z_+$
such that for all $a\in\{1,2,\ldots,n\}^{I}$ there exists $x \in\mathcal{G}$
with the property that
\begin{equation}\label{eq: in the int}
x\in \bigcap_{k\in I} T^{-k} {\rm int}(V_{a(k)}),\quad {\rm i.e.}\
T^kx\in \operatorname{int}(V_{a(k)}) \ \text{ for any }k\in I.
\end{equation}
\end{prop}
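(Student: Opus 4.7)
The plan is to build the infinite independence set $I$ inductively via a Cantor-tree construction driven by Bohr-type recurrence of the minimal rotation $T$, and to harvest the witness point in $\mathcal{G}$ through a Baire category argument. The essential input from conditions (i)--(iii) is a Lebesgue density point $p$ of $E := \bigcap_{i=1}^{n} V_i$; at such a $p$, condition (i) guarantees $p \in \overline{{\rm int}(V_i)}$ for every $i$, so every small ball around $p$ meets each ${\rm int}(V_i)$ in a non-empty open set of positive Haar measure, while (ii) ensures these interiors remain pairwise disjoint.

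First I would fix such a density point $p$ and a scale $r_{0}$ below which the above local abundance is quantitative. Then I would construct $I = \{k_1 < k_2 < \cdots\}$ together with, for each $m$ and each labeling $a \in \{1,\dots,n\}^{\{k_1,\dots,k_m\}}$, a non-empty open ball $U_{m,a}$ of diameter less than $1/m$ satisfying $\overline{U_{m,a}} \subset \bigcap_{j=1}^{m} T^{-k_j}{\rm int}(V_{a(k_j)})$ and $\overline{U_{m+1,a'}} \subset U_{m, a'|_{\{k_1,\dots,k_m\}}}$ whenever $a'$ extends $a$. The crux of the inductive step is to choose $k_{m+1} > k_m$ so that every one of the (finitely many) sets $U_{m,a} \cap T^{-k_{m+1}}{\rm int}(V_i)$ is non-empty; to achieve this, I would anchor all the $U_{m,a}$ inside a common small neighborhood of a base point $z$, and pick $k_{m+1}$ so that $T^{k_{m+1}}z$ lands close to a point of $E$. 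By unique ergodicity the set of such $k_{m+1}$ has positive lower density, and near the $E$-image of $z$ the local abundance of each ${\rm int}(V_i)$ (transferred from $p$ via $T$) guarantees that pulling back small sub-balls produces the required $U_{m+1, a \cdot i}$.

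To land the witness in $\mathcal{G}$, fix a countable decomposition $X \setminus \mathcal{G} = \bigcup_{m \ge 1} F_m$ with each $F_m$ nowhere dense, and at each stage $m$ additionally require $\overline{U_{m,a}} \cap F_m = \emptyset$ for every $a$ at that level; this is achievable because $F_m$ is nowhere dense, so there is open dense room inside each previously built $U_{m-1, a'|_{\{k_1,\dots,k_{m-1}\}}}$ in which to place the shrunk $U_{m,a}$ off $F_m$. For every $a \in \{1,\dots,n\}^I$ the nested compacts $\overline{U_{m,a|_{\{k_1,\dots,k_m\}}}}$ then contract to a single point $x \in \bigcap_m (X \setminus F_m) = \mathcal{G}$ that by construction satisfies $T^{k_j}x \in {\rm int}(V_{a(k_j)})$ for every $j$.

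The main obstacle is the inductive step, which demands a single $k_{m+1}$ to meet the $n^{m+1}$ branch-level recurrence constraints simultaneously; naive Bohr-set intersection arguments may fail because the Minkowski differences defining the relevant Bohr sets can lie in disjoint regions of $X$. The remedy is the geometric concentration enabled by the density point: keeping all the balls $U_{m,a}$ inside a common small neighborhood makes the Bohr conditions for distinct branches correspond to approximately the same almost-return time, and the density-point abundance then ensures their common solution set remains non-empty. Condition (iii) is indispensable here, since without positive measure of $\bigcap V_i$ there would be no density point supporting all the interiors simultaneously and the construction would collapse.
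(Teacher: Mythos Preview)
The paper does not prove this proposition; it is quoted from \cite[Proposition 3.2]{HLSY}. So there is no in-paper argument to compare against, and I will assess your sketch on its own merits.

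The tree-plus-Baire framework is sound, and your handling of the residual set $\mathcal{G}$ by excising the nowhere dense pieces $F_m$ stage by stage is exactly right. The difficulty is entirely in the inductive choice of $k_{m+1}$, and the mechanism you propose there does not work as stated.

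Your remedy is to ``anchor all the $U_{m,a}$ inside a common small neighborhood of a base point $z$'' and then choose $k_{m+1}$ so that $T^{k_{m+1}}z$ lands near $E$. But the $U_{m,a}$ cannot all sit inside a genuinely small ball: already at level $1$ the sets $U_{1,1},\dots,U_{1,n}$ lie in the pairwise disjoint open sets $T^{-k_1}\mathrm{int}(V_1),\dots,T^{-k_1}\mathrm{int}(V_n)$, so any ball containing them all must have diameter at least the separation of these translated interiors --- a fixed positive number $d_0$ that does not shrink with $m$. Consequently, knowing $T^{k_{m+1}}z$ is close to $E$ tells you nothing about the individual images $T^{k_{m+1}}U_{m,a}$, which are tiny sets scattered across a region of diameter $d_0$ around $T^{k_{m+1}}z$. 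For your step to go through you would need every sub-ball of $B(T^{k_{m+1}}z, d_0)$ of radius $\sim 1/m$ to meet every $\mathrm{int}(V_i)$, i.e.\ each $\mathrm{int}(V_i)$ to be dense in $B(T^{k_{m+1}}z,d_0)$; but disjoint non-empty open sets cannot all be dense in the same ball, so this is impossible. The density-point heuristic, however suggestive, cannot overcome this obstruction.

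What is actually needed in the inductive step is a single $k_{m+1}$ lying in the intersection $\bigcap_{a}\bigcap_{i}\{k: U_{m,a}\cap T^{-k}\mathrm{int}(V_i)\neq\emptyset\}$, and this requires more than a single near-return of one reference point. One workable route (in the spirit of \cite{HLSY}) is to maintain at each stage not just the open cells $U_{m,a}$ but also a positive-measure ``core'' related to $E$ inside each cell, and to use the positive measure of $E$ together with Haar-measure translation invariance to force a common $k_{m+1}$; the regularity hypothesis $V_i=\overline{\mathrm{int}(V_i)}$ is then used to pass from the measure-theoretic intersection back to the open interiors. Your sketch recognises that (iii) is essential, but the way it is deployed has to be finer than a single density point controlling a single orbit.
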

A subset $Z\subset X$ is called {\it proper} if  $Z$ is  a compact subset with $\overline{\operatorname{int}(Z)} = Z$. The following lemma can help us to complete the proof of Theorem \ref{thm:ms=>it}.
\begin{lem}\label{lem:proper one to one}
Let $(X,T)$ and $(Y,S)$ be  two t.d.s., and $\pi:(X,T)\to (Y,S)$ be a factor map. Suppose that $(X,T)$ is minimal. Then the image of proper subsets of $X$ under $\pi$ are proper subset of $Y$.
\end{lem}
\begin{proof}
Given a proper subset $Z$ of $X$, we will show $\pi(Z)$ is also proper. It is clear that $\pi(Z)$ is compact, as $\pi$ is continuous.  Now we prove $\overline{\operatorname{int}(\pi(Z))} = \pi(Z)$.

It follows from the closeness of $\pi(Z)$ that $\overline{\operatorname{int}(\pi(Z))} \subset \pi(Z)$. On the other hand, for any $y\in \pi(Z), $ take $x\in \pi^{-1}(y)\cap Z$. Since  $\pi^{-1}(y)\cap Z=\pi^{-1}(y)\cap\overline{\operatorname{int}(Z)}$, there exists  a sequence $\{x_n\}_{n\in\mathbb{N}}$ such that $x_n\in \operatorname{int}(Z)$ and $\lim_{n\to \infty}x_n=x$.  Let $\{r_n\}_{n\in\mathbb{N}}$ be a sequence of $\mathbb{R}$ satisfying $$\lim_{n\to\infty}r_n=0\text{ and }B(x_n,r_n)\subset \operatorname{int}(Z).$$ By the minimality of $(X,T)$, we have $\pi$ is semi-open, and hence $\operatorname{int}(\pi(B(x_n,r_n)))\neq \emptyset$. Thus, there exists $x_n'\in B(x_n,r_n)$ such that $\pi(x_n')\in \operatorname{int}(\pi(B(x_n,r_n)))\subset\operatorname{int}(\pi(Z))$. Since $x_n'\in B(x_n,r_n)$ and $\lim_{n\to \infty}x_n=x$, one has $\lim_{n\to \infty}x_n'=x$, and hence $\lim_{n\to \infty}\pi(x_n')=\pi(x)=y.$ This implies that $y\in \overline{\operatorname{int}(\pi(Z))}$, which finishes the proof.
\end{proof}

Inspired by \cite[Proposition 3.7]{HLSY}, we can give the proof of Theorem  \ref{thm:ms=>it}.

%
\begin{proof} [Proof  of Theorem  \ref{thm:ms=>it}]
	It suffices to prove $MS^e_n(X,T)\subset IT_n^e(X,T)$.	
Given $\textbf{x}=(x_1,\ldots,x_n)\in MS^e_n(X,T)$, we will show that $\textbf{x}\in IT^e_n(X,T).$

Since the minimal t.d.s. $(X_{eq},T_{eq})$ is the maximal equicontinuous factor of $(X,T)$,  then $X_{eq}$ can be viewed as a compact metric group with a $T_{eq}$-invariant metric $d_{eq}$. Let $\mu$ be  the left Haar probability measure of $X_{eq}$, which is also
the unique $T_{eq}$-invariant probability measure of $(X_{eq},T_{eq})$. Let
$$X_1=\{x\in X: \#\{\pi^{-1}(\pi(x))\}=1\}, \quad Y_1=\pi(X_1).$$
Then $Y_1$ is a dense $G_\delta$-set as $\pi$ is almost one to one.

Without loss of generality, assume that $\ep=\frac 14 \min_{1\le i\neq j\le n}d(x_i,x_j)$. Let $U_i=\overline{B_\ep(x_i)}$
for $1\le i\le n$. Then $U_i$  is proper for each $1\le i\le n$.
We will show that $U_1,U_2,\ldots,U_n$ is an infinite independent tuple of
$(X,T)$, i.e. there is some infinite set $I\subseteq \mathbb{Z}_+$ such that
$$\bigcap_{k\in I}T^{-k}U_{a(k)}\neq \emptyset, \  \text{for all} \  a\in \{1,2,\ldots,n\}^I.$$

Let $V_i=\pi(U_i)$ for $1\le i\le n$. By Lemma \ref{lem:proper one to one}, $V_i$ is proper
for each $i\in \{1,2,\ldots,n\}$.
We claim that ${\rm int }(V_i)\cap {\rm int}(V_j)=\emptyset$ for all $1\le i\neq j\le n$.
In fact, if there is some  $1\le i\neq j\le n$ such that
${\rm int }(V_i)\cap {\rm int}(V_j)\not=\emptyset$, then
$${\rm int }(V_i)\cap {\rm int}(V_j)\cap Y_1\not=\emptyset,$$
as $Y_1$ is a dense $G_\d$-set. Let $y\in {\rm int }(V_i)\cap {\rm int}(V_j)\cap Y_1$.
Then there are $x_i\in U_i$ and $x_j\in U_j$ such that $y=\pi(x_i)=\pi(x_j)$, which contradicts with $y\in Y_1$.

Choose a nonempty open set $W_m\subset X$ with $\operatorname{diam}(\pi(W_m))<\frac{1}{m}$ for each $m\in \N$. Since $\textbf{x}\in MS^e_n(X,T)$,
there exist $\delta>0$ and $\textbf{x}^m=(x_1^m, x_2^m,\cdots, x_n^m)\in W_m\times \dots \times W_m$ such that
$\overline{D}(N(\textbf{x}^m, U_1\times U_2\times \cdots \times U_n))\ge \delta.$
Let $\textbf{y}^m=(y_1^m,y_2^m,\cdots,y_n^m)=\pi^{(n)} (\textbf{x}^m)$. Then
$$\overline{D}(N(\textbf{y}^m, V_1\times V_2\times \cdots \times V_n))\ge \delta.$$
For $p\in \overline{D}(N(\textbf{y}^m, V_1\times V_2\times \cdots \times V_n))$,
$T_{eq}^py_i^m\in V_i$.
As $\operatorname{diam}(\pi(W_m))<\frac{1}{m}$, $d_{eq}(y_1^m,y_i^m)<\frac{1}{m}$ for $1\le i\le n$.
Note that $$d_{eq}(T_{eq}^py_1^m,T_{eq}^py_i^m)=d_{eq}(y_1^m,y_i^m)<\frac{1}{m}\quad\text{ for }1\le i\le n.$$
Let $V_i^m=B_{\frac{1}{m}}(V_i)=\{y\in X_{eq}:d_{eq}(y,V_i)<\frac{1}{m}\}$.
Then $T_{eq}^py_1^m\in \cap_{i=1}^n V_i^m$
and $$\overline{D}(N(y_1^m, \cap_{i=1}^n V_i^m))\ge \delta.$$
Since $(X_{eq},T_{eq})$ is uniquely ergodic with respect to the measure $\mu$, $\mu(\cap_{i=1}^n V_i^m)\ge \delta$.
Letting $m\to \infty$, one has $\mu(\cap_{i=1}^n V_i)\ge \delta>0.$

 By Proposition \ref{independent sets},
there is an infinite $I\subseteq \mathbb{Z}_+$ such that for all $a\in\{1,2,\ldots,n\}^{I}$ there exists $y_0\in Y_1$
with the property that
\begin{equation*}
y_0\in \bigcap_{k\in I} T_{eq}^{-k} {\rm int}(V_{a(k)}).
\end{equation*}
 Set $\pi^{-1}(y_0)=\{x_0\}$. Then
\begin{equation*}
x_0\in \bigcap_{k\in I} T^{-k} U_{a(k)},
\end{equation*}
which implies that $(x_1,x_2,\cdots,x_n)\in IT_n(X,T)$.
\end{proof}

\section*{Acknowledgments}

We thank the referee for a very careful reading and
many useful comments, which helped us to improve the paper.
Research of Jie Li is supported by NNSF of China (Grant No. 12031019); Chunlin Liu is partially supported by NNSF of China (Grant No. 12090012); Siming Tu is supported by NNSF of China (Grant No. 11801584 and No. 12171175); and Tao Yu is supported by NNSF of China (Grant No. 12001354) and STU Scientific Research Foundation for Talents (Grant No. NTF19047).

\begin{appendix}
\section{Proof of Lemma \ref{0726}}\label{APPENDIX}
In this section, we give the proof of Lemma \ref{0726}.

\begin{lem}\label{0724}
For a m.p.s. $(X,\mathcal{B}_X,\mu,T)$ with $\mathcal{K}_\mu$ its Kronecker factor, $n\in\N$
and $f_i\in L^\infty(X,\mu)$, $i=1,\dotsc,n$, we have
\[
\lim_{M \to \infty} \dfrac{1}{M} \sum_{m=1}^M \prod_{i=1}^{n} f_i( T^m x_i)
=
\lim_{M \to \infty} \dfrac{1}{M} \sum_{m=1}^M \prod_{i=1}^{n}\mathbb{E}(f_i | \mathcal{K}_\mu)(T^m x_i).
\]
\end{lem}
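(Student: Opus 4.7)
Both averages in the lemma are ergodic averages under the diagonal transformation $T^{(n)} := T\times\cdots\times T$ acting on $(X^n,\mu^n)$, so the von Neumann mean ergodic theorem ensures their convergence in $L^2(\mu^n)$. The equality of the two limits will follow once I show that their difference tends to $0$ in $L^2(\mu^n)$. To that end, I decompose each $f_i = \mathbb{E}(f_i\mid \mathcal{K}_\mu) + g_i$ with $g_i \perp L^2(X,\mathcal{K}_\mu,\mu)$ and expand telescopically:
\[
\prod_{i=1}^n f_i(x_i) - \prod_{i=1}^n \mathbb{E}(f_i\mid\mathcal{K}_\mu)(x_i) = \sum_{j=1}^n \Bigl(\prod_{i<j} f_i(x_i)\Bigr)\, g_j(x_j)\, \Bigl(\prod_{i>j}\mathbb{E}(f_i\mid\mathcal{K}_\mu)(x_i)\Bigr).
\]
Hence it suffices to prove that whenever $h_1,\ldots,h_n\in L^\infty(\mu)$ and some distinguished $h_j$ is orthogonal to $L^2(\mathcal{K}_\mu)$, the average
\[
A_M(x_1,\ldots,x_n) := \frac{1}{M}\sum_{m=1}^M \prod_{i=1}^n h_i(T^m x_i)
\]
tends to $0$ in $L^2(\mu^n)$.

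Expanding $\|A_M\|_{L^2(\mu^n)}^2$, applying Fubini, and using the $T$-invariance of $\mu$ yields
\[
\|A_M\|_{L^2(\mu^n)}^2 = \frac{1}{M^2}\sum_{|k|<M}(M-|k|)\,\langle h_j,T^k h_j\rangle \prod_{i\neq j}\langle h_i,T^k h_i\rangle.
\]
Bounding $|\langle h_i,T^k h_i\rangle|\leq\|h_i\|_\infty^2$ for $i\neq j$, this quantity is dominated by a constant (depending on the $\|h_i\|_\infty$) times $\tfrac{1}{M}\sum_{|k|<M}|\langle h_j, T^k h_j\rangle|$, so the entire task reduces to showing that this last quantity vanishes as $M\to\infty$.

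Because the Kronecker algebra $\mathcal{K}_\mu$ has $L^2$-completion $H_c$ equal to the closed linear span of the eigenfunctions of the Koopman operator $U_T$, the assumption $h_j\perp L^2(\mathcal{K}_\mu)$ forces the spectral measure $\sigma_{h_j}$ on $\mathbb{T}$ to have no atoms. Wiener's lemma then yields
\[
\frac{1}{M}\sum_{|k|<M}|\langle h_j,T^k h_j\rangle|^2 = \frac{1}{M}\sum_{|k|<M}|\widehat{\sigma}_{h_j}(k)|^2 \;\xrightarrow[M\to\infty]{}\; \sum_{\omega\in\mathbb{T}}\sigma_{h_j}(\{\omega\})^2 = 0,
\]
and Cauchy--Schwarz upgrades this to $\tfrac{1}{M}\sum_{|k|<M}|\langle h_j,T^k h_j\rangle|\to 0$, completing the reduction. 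The only genuinely delicate ingredient is the identification of $L^2(\mathcal{K}_\mu)$ with the point-spectrum subspace of $U_T$; this is the classical characterization of the Kronecker algebra already quoted in the preliminaries, and once that is in hand the rest is a routine spectral/van-der-Corput-type computation, so I expect this to be the cleanest route.
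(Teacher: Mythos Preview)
Your argument is correct, but the paper proceeds by a different, more structural route. The paper applies Birkhoff's pointwise ergodic theorem to identify each side with a conditional expectation onto the invariant $\sigma$-algebra $I_{\mu^{(n)}}$ of the product system $(X^n,\mu^{(n)},T^{(n)})$; it then quotes \cite[Lemma~4.4]{HMY04} to the effect that $(\mathcal{K}_\mu)^{\otimes n}=\mathcal{K}_{\mu^{(n)}}$, so that $\prod_i\mathbb{E}(f_i\mid\mathcal{K}_\mu)=\mathbb{E}_{\mu^{(n)}}(\bigotimes_i f_i\mid\mathcal{K}_{\mu^{(n)}})$, and finishes with the tower property using $I_{\mu^{(n)}}\subset\mathcal{K}_{\mu^{(n)}}$. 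Your approach instead telescopes the product, reduces to the case where one tensor factor is orthogonal to $H_c$, and kills that term by a direct spectral computation via Wiener's lemma. The paper's proof is shorter once the external structural lemma is available and yields almost-everywhere equality directly; your argument is self-contained (relying only on the classical identification of $H_c$ with the pure-point subspace of $U_T$) and yields $L^2$-equality, from which the almost-everywhere statement follows since both sides converge a.e.\ by Birkhoff.
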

\begin{proof}
On the one hand, by the Birkhoff ergodic theorem, for $\textbf{x}=(x_1,\dotsc,x_n)\in X^{(n)}$,
let $F(\textbf{x})=F(x_1,\dots,x_n)=\prod_{i=1}^{n} f_i(x_i)$,
\[\lim_{M \to \infty} \dfrac{1}{M} \sum_{m=1}^M \prod_{i=1}^{n} f_i( T^m x_i)
=\lim_{M\to\infty}\dfrac{1}{M} \sum_{m=1}^{M} F\left(\left(T^{(n)}\right)^m\textbf{x}\right)=
\mathbb{E}_{\mu^{(n)}}(\prod_{i=1}^{n}f_i|I_{\mu^{(n)}})(\textbf{x}),\]
where $I_{\mu^{(n)}}=\{A\in \mathcal{B}^{(n)}_X: T^{(n)}A=A\}$.

On the other hand, following \cite[Lemma 4.4]{HMY04}, we have $(\mathcal{K}_\mu)^{\bigotimes n}=\mathcal{K}_{\mu^{(n)}}$. Then for $\textbf{x}=(x_1,\dotsc,x_n)\in X^{(n)}$,
\[\prod_{i=1}^{n}\mathbb{E}_{\mu}(f_i|\mathcal{K}_\mu)(x_i)
=\mathbb{E}_{\mu^{(n)}}(\prod_{i=1}^{n}f_i|(\mathcal{K}_\mu)^{\bigotimes n})(\textbf{x})
=\mathbb{E}_{\mu^{(n)}}(\prod_{i=1}^{n}f_i|\mathcal{K}_{\mu^{(n)}})(\textbf{x}).\]
This implies that
\begin{align*}
\lim_{M\to\infty}\dfrac{1}{M} \sum_{m=1}^{M} \prod_{i=1}^{n}  \mathbb{E}_{\mu}(f_i|\mathcal{K}_\mu)(T^mx_i)
= & \mathbb{E}_{\mu^{(n)}}(\prod_{i=1}^{n}\mathbb{E}_{\mu}(f_i|\mathcal{K}_\mu)|I_{\mu^{(n)}})(\textbf{x})\\
= &\mathbb{E}_{\mu^{(n)}}(\mathbb{E}_{\mu^{(n)}}(\prod_{i=1}^{n}f_i|\mathcal{K}_{\mu^{(n)}})|I_{\mu^{(n)}})(\textbf{x})\\
= &\mathbb{E}_{\mu^{(n)}}(\prod_{i=1}^{n}f_i|I_{\mu^{(n)}})(\textbf{x}),
\end{align*}
where the last equality follows from the fact that $I_{\mu^{(n)}}\subset\mathcal{K}_{\mu^{(n)}}.$
\end{proof}

\begin{lem}\label{0725}
Let $(Z,\B_Z,\nu,R)$ be a minimal rotation on a compact abelian group.
Then for any $n\in\mathbb{N}$ and $\phi_i\in L^\infty(Z,\nu)$,
$i=1,\dotsc,n$,,
\[\lim_{M\to\infty}\dfrac{1}{M} \sum_{m=1}^{M} \prod_{i=1}^{n} \phi_i (R^mz_i)
= \int_Z \prod_{i=1}^{n} \phi_i (z_i+z)d \nu(z) \quad\text{ for }\nu^{(n)}\text{-a.e.  }(z_1,\ldots, z_n).
\]
\end{lem}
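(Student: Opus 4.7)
The plan is to prove Lemma \ref{0725} by Fourier analysis on the compact abelian group $Z$: verify the identity first for characters, extend by multilinearity to trigonometric polynomials, and then pass to general $\phi_i\in L^\infty(Z,\nu)$ by approximation. Write $Rz = z+\alpha$; by minimality, the orbit $\{m\alpha\}_{m\in\N}$ is dense in $Z$.

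For characters $\phi_i = \chi_i$, setting $\chi^*:=\prod_{i=1}^n\chi_i$, one has
\[
\prod_{i=1}^n \phi_i(R^m z_i) = \Bigl(\prod_{i=1}^n \chi_i(z_i)\Bigr)\chi^*(\alpha)^m,
\]
so the Cesaro averages equal $\prod_i\chi_i(z_i)$ when $\chi^*(\alpha)=1$ and tend to $0$ otherwise. Density of $\{m\alpha\}$ in $Z$ forces $\chi^*(\alpha)=1 \iff \chi^*\equiv 1$ on $Z$, while the integral on the right equals $\prod_i\chi_i(z_i)\cdot\int_Z\chi^*(z)\,d\nu(z)$, which by orthogonality of characters matches the two cases. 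Hence the identity holds for characters, and by multilinearity it extends to trigonometric polynomials in each slot.

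For general $\phi_i\in L^\infty(Z,\nu)$, Stone--Weierstrass yields dense trigonometric polynomial approximations in $C(Z)$, and $C(Z)$ is $L^2(\nu)$-dense in $L^\infty(\nu)$; choose $\psi_i$ with $\|\phi_i - \psi_i\|_{L^2(\nu)}<\varepsilon$ and $\|\psi_i\|_\infty \le \|\phi_i\|_\infty$. A telescoping decomposition
\[
\prod_{i=1}^n\phi_i(R^m z_i)-\prod_{i=1}^n\psi_i(R^m z_i)=\sum_{j=1}^n\Bigl(\prod_{i<j}\psi_i(R^m z_i)\Bigr)(\phi_j-\psi_j)(R^m z_j)\Bigl(\prod_{i>j}\phi_i(R^m z_i)\Bigr),
\]
together with the $L^2$ mean ergodic theorem on $(Z,\nu,R)$ applied in the $z_j$-slot, controls the $L^2(\nu^{(n)})$-difference between the two Birkhoff averages in terms of $\varepsilon$ and the $L^\infty$-norms. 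Sending $\varepsilon\to 0$ gives the formula in $L^2(\nu^{(n)})$; combining this with a standard maximal-inequality argument upgrades it to $\nu^{(n)}$-a.e. convergence, which is the form required when feeding into Lemma \ref{0726}.

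The main obstacle is precisely this approximation step: the product rotation $R^{(n)}$ on $(Z^n,\nu^{(n)})$ is \emph{not} ergodic for $n\ge 2$, so one cannot directly invoke Birkhoff on the product system. Its ergodic decomposition consists of the diagonal-translate measures $\mu_{(z_1,\ldots,z_n)}=\Phi_*\nu$ with $\Phi(z)=(z_1+z,\ldots,z_n+z)$, each conjugate to the uniquely ergodic $(Z,\nu,R)$; this immediately yields \emph{uniform} convergence for continuous $\phi_i$, and the telescoping-plus-maximal-inequality argument is what extends the result uniformly across these fibers from continuous to general $L^\infty$ functions.
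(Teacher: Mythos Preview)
Your argument is correct, but it takes a considerably more elaborate route than the paper's. The paper observes that, writing $Rz=z+a$, the single-variable function $F(w):=\prod_{i=1}^n \phi_i(z_i+w)$ satisfies $\prod_i \phi_i(R^m z_i)=F(ma)=F(R^m e_Z)$, so the left-hand side is nothing but an ordinary Birkhoff average of $F\in L^\infty(Z,\nu)$ under the single ergodic rotation $(Z,\nu,R)$; the pointwise ergodic theorem (for $\nu^{(n)}$-a.e.\ $(z_1,\dots,z_n)$) then gives $\int_Z F\,d\nu$, which is exactly the right-hand side.

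This one-line reduction dissolves precisely the ``main obstacle'' you flag: since the diagonal action of $R^{(n)}$ on each orbit closure in $Z^n$ is isomorphic to $(Z,R)$ itself, the non-ergodicity of $R^{(n)}$ on $(Z^n,\nu^{(n)})$ never needs to be confronted. You actually record this structural fact in your final paragraph, but use it only to support the approximation scheme rather than as the whole proof. Your character computation, multilinear extension, telescoping, and maximal-inequality upgrade are all sound and have the pedagogical merit of making the spectral picture explicit, but they reprove by hand what the one-variable Birkhoff theorem gives for free once the substitution $w\mapsto F(w)$ is made.
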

\begin{proof}
Since $(Z,\B_Z,\nu,R)$ be a minimal rotation on a compact abelian group,
there exists $a\in Z$ such that $R^mz=z+ma$ for any $z\in Z$.

Let $F(z)=\prod_{i=1}^{n} \phi_i (z_i+z)$.
Then $F(R^me_Z)=F(ma)$ where $e_Z$ is identity element of $Z$.
Since $(Z,R)$ is minimal equicontinuous,
$(Z,\B_Z,\nu,R)$ is uniquely ergodic.
	By an approximation argument, we have, for $\nu^{(n)}$-a.e. $(z_1,\ldots, z_n)$,
\begin{align*}
\lim_{M\to\infty}\dfrac{1}{M} \sum_{m=1}^{M}\prod_{i=1}^{n} \phi_i(R^mz_i)
=&\lim_{M\to\infty}\dfrac{1}{M} \sum_{m=1}^{M}\prod_{i=1}^{n} \phi_i (z_i+ma)\\
=&\lim_{M\to\infty}\dfrac{1}{M} \sum_{m=1}^{M} F(ma)
=\lim_{M\to\infty}\dfrac{1}{M} \sum_{m=1}^{M} F(R^me_Z)\\
=&\int_Z F(z) d \nu(z)
=  \int_Z \prod_{i=1}^{n} \phi_i (z_i+z) d \nu(z).
\end{align*}
The proof is completed.
\end{proof}

\begin{proof}[Proof of Lemma \ref{0726}.]
Let  $z \mapsto \eta_z$ be  the disintegration of $\mu$ over the continuous factor map $\pi$ from $(X,\B_X,\mu,T)$ to its Kronecker factor $(Z,\B_Z,\nu,R)$. For $n\in\N$,
	define
	\begin{equation*}
	\label{eqn:lambda_2_dim_definition_for_section_2_is_this_unqiue_yet}
	\lambda^n_{\textbf{x}} = \int_Z \eta_{z + \pi(x_1)} \times\dots\times \eta_{z+\pi(x_n)} d\nu(z)
	\end{equation*}
	for every $\textbf{x}=(x_1,\dotsc,x_n) \in X^{(n)}$.
	
	We first note that for each $\textbf{x} \in X^{(n)}$ the measures $\eta_{z + \pi(x_i)}$ are defined for $\nu$-a.e.  $z \in Z$ and therefore  is well-defined.
	To prove that $\textbf{x} \mapsto \lambda^n_\textbf{x}$ is continuous first note that uniform continuity implies
	\[
	(u_1,\dotsc,u_n) \mapsto \int_Z \prod_{i=1}^{n}f_i(z + u_i) d\nu(z)
	\]
	from $Z^{(n)}$ to $\mathbb{C}$ is continuous whenever $f_i \colon  Z \to \mathbb{C}$ are continuous.
	An approximation argument then gives continuity for every $f_i \in L^\infty(Z,\nu)$.
	In particular,
	\[
	\textbf{x} \mapsto \int_Z \prod_{i=1}^{n}\mathbb{E}(f_i \mid \B_Z)(z + \pi(x_i)) d\nu(z)
	\]
	from $X^{(n)}$ to $\mathbb{C}$ is continuous whenever $f_i \in L^\infty(X,\mu)$, which in turn implies continuity of $\textbf{x} \mapsto \lambda_{\textbf{x}}^n$.
	
	To prove that $\textbf{x}\mapsto \lambda_{\textbf{x}}^n$ is an ergodic decomposition we first calculate
	\begin{equation*}
 	\int_{X^{(n)}} \int_Z \prod_{i=1}^{n}\eta_{z + \pi(x_i)}d \nu(z) d \mu^{(n)}(\textbf{x})
=\int_Z \prod_{i=1}^{n}\int_X \eta_{z + \pi(x_i)}   d \mu(x_i) d \nu(z),
	\end{equation*}
	which is equal to $\mu^{(n)}$ because all inner integrals are equal to $\mu$.
	We conclude that
	\begin{equation*}
	\label{eq_continuousergodicdecompositionofmu1}
	\mu^{(n)} = \int_{X^{(n)}}\lambda^n_\textbf{x} d \mu^{(n)}(\textbf{x}),
	\end{equation*}
	which shows $\textbf{x} \mapsto \lambda^n_\textbf{x}$ is a disintegration of $\mu^{(n)}$.
	
	We are left with verifying that
	\[
	\int_{X^{(n)}} F d \lambda^n_\textbf{x} = \mathbb{E}_{\mu^{(n)}}(F \mid I_{\mu^{(n)}})(\textbf{x})
	\]
	for $\mu^{(n)}$-a.e. $\textbf{x}\in X^{(n)}$ whenever $F \colon  X^{(n)} \to \mathbb{C}$ is measurable and bounded.
	Recall that $I_{\mu^{(n)}}$ denotes the $\sigma$-algebra of $T^{(n)}$-invariant sets.
	Fix such an $F$.
	It follows from the pointwise ergodic theorem that
	\[
	\lim_{M \to \infty} \dfrac{1}{M} \sum_{m=1}^M F( T^m x_1,\dotsc, T^m x_n)
	=
	\mathbb{E}_{\mu^{(n)}}(F \mid I_{\mu^{(n)}})(\textbf{x})
	\]
	for $\mu^{(n)}$-a.e. $\textbf{x}\in X^{(n)}$.
	We therefore wish to prove that
	\[
	\int_{X^{(n)}} F d \lambda^n_\textbf{x} = \lim_{M \to \infty} \dfrac{1}{M} \sum_{m=1}^M F( T^m x_1,\dotsc, T^m x_n)
	\]
	holds for $\mu^{(n)}$-a.e. $\textbf{x} \in X^{(n)}$.
	
	By an approximation argument it suffices to verify that
	\begin{equation*}
	\label{eqn:proving_ergodic_kk}
	\int_{X^{(n)}} f_1 \otimes\dots\otimes f_n d \lambda^n_\textbf{x}
= \lim_{M \to \infty} \dfrac{1}{M} \sum_{m=1}^M \prod_{i=1}^{n} f_i( T^m x_i)
	\end{equation*}
	holds for $\mu^{(n)}$-a.e.  $\textbf{x} \in X^{(n)}$ whenever $f_i$ belongs to $L^\infty(X,\mu)$ for $i=1,...,n$.
	
By Lemma \ref{0724},
	\[
	\lim_{M \to \infty} \dfrac{1}{M} \sum_{m=1}^M \prod_{i=1}^{n} f_i( T^m x_i)
	=
	\lim_{M \to \infty} \dfrac{1}{M} \sum_{m=1}^M \prod_{i=1}^{n}\mathbb{E}(f_i \mid \B_Z)(T^m x_i)
	\]
	for $\mu^{(n)}$-a.e.  $\textbf{x}\in X^{(n)}$.
	By Lemma \ref{0725}, for every $\phi_i$ in $L^\infty(Z,\nu)$,
	\[\lim_{M \to \infty}\dfrac{1}{M} \sum_{m=1}^{M} \prod_{i=1}^{n} \phi_i (R^mz_i)
= \int_Z \prod_{i=1}^{n} \phi_i (z_i+z)d \nu(z)\]
for $\nu^{(n)}$-a.e.  $\textbf{z}\in Z^{(n)}$.
	Taking $\phi_i = \mathbb{E}(f_i \mid \B_Z)$ gives
	\[
	\lim_{M \to \infty} \dfrac{1}{M} \sum_{m=1}^M \prod_{i=1}^{n}\mathbb{E}(f_i \mid \B_Z)(T^m x_i)
	=
	\int_{X^{(n)}} f_1 \otimes \dots \otimes f_n d \lambda^n_{\textbf{x}}
	\]
	for $\mu^{(n)}$-a.e. $\textbf{x}\in X^{(n)}$.
\end{proof}

\end{appendix}

	\end{document}